\documentclass{amsart}
\usepackage{enumerate}
\usepackage{tikz}
\usepackage{amssymb}
\usepackage{xcolor}

\title[Edge-choosability in simple graphs with restricted odd cycles] {The list chromatic index of simple graphs whose odd cycles intersect in at most one edge}
\author{Jessica McDonald}
\author{Gregory J.~Puleo}
\thanks{The first author is supported in part by NSF grant DMS-1600551}
\address{Department of Mathematics and Statistics, Auburn University, Auburn, Alabama, USA 36849}
\email[Jessica McDonald]{mcdonald@auburn.edu}
\email[Gregory J. Puleo]{gjp0007@auburn.edu}
\usetikzlibrary{arrows}
\usetikzlibrary{decorations.pathreplacing}
\usetikzlibrary{decorations.pathmorphing}
\tikzstyle{vertex}=[inner sep = 0pt, minimum width=4pt, fill=black, shape=circle]
\tikzstyle{squarevert}=[inner sep = 0pt, minimum width=4pt, minimum height=4pt, fill=white, shape=rectangle, draw=black, thick]
\tikzstyle{trivert}=[inner sep = 0pt, minimum width=6pt, minimum height=6pt, fill=gray!50!white, shape=regular polygon,regular polygon sides=3, draw=black, thick]
\tikzstyle{pentvert}=[inner sep = 0pt, minimum width=6pt, minimum height=6pt, fill=gray!15!white, shape=regular polygon,regular polygon sides=5, draw=black, thick]
\tikzstyle{stretchy}=[decorate, decoration={snake, segment length=.3cm, amplitude=.05cm}]
\newcommand{\ato}{\mathrel{\rightarrow}}
\newcommand{\bidir}{\mathrel{\leftrightarrow}}
\newcommand{\cbar}{\overline{C}}
\newcommand{\gpoint}[2]{\node[style=vertex, label=#1:$#2$]}

\newcommand{\bpoint}[1]{\gpoint{below}{#1}}
\newcommand{\apoint}[1]{\gpoint{above}{#1}}
\newcommand{\lpoint}[1]{\gpoint{left}{#1}}
\newcommand{\rpoint}[1]{\gpoint{right}{#1}}


\newcommand{\lz}{\ell}

\newcommand{\lc}{\chi_l}

\newcommand{\nats}{\mathbb{N}}

\renewcommand{\subset}{\subseteq}
\newcommand{\gee}[1]{\mathcal{G}_{#1}}
\newcommand{\gstar}{\mathcal{G}^*}
\newcommand{\st}{\colon\,}
\newcommand{\join}{\vee}
\newcommand{\iso}{\cong}

\newcommand{\newstuff}[1]{#1}

\newcommand{\caze}[2]{\textbf{Case {#1}:} \textit{#2}}
\newcommand{\sizeof}[1]{\left\lvert{#1}\right\rvert}

\newtheorem{proposition}{Proposition}[section]

\newtheorem{lemma}[proposition]{Lemma}
\newtheorem{theorem}[proposition]{Theorem}
\newtheorem{corollary}[proposition]{Corollary}
\theoremstyle{definition}

\theoremstyle{remark}
\newtheorem*{remark}{Remark}

\newcommand{\galvin}[1]{$#1$-edge-orientable}
\newcommand{\dalvin}[1]{$#1$-kernel-perfect}
\newcommand{\sgalvin}[1]{strongly \galvin{#1}}
\begin{document}
\begin{abstract}
  We study the class of simple graphs $\gstar$ for which every pair of
  distinct odd cycles intersect in at most one edge. We give a
  structural characterization of the graphs in $\gstar$ and prove that
  every $G\in \gstar$ satisfies the list-edge-coloring
  conjecture. When $\Delta(G)\geq 4$, we in fact prove a stronger
  result about kernel-perfect orientations in $L(G)$ which implies that $G$ is
  $(m\Delta(G):m)$-edge-choosable and $\Delta(G)$-edge-paintable for
  every $m \geq 1$.
\end{abstract}
\maketitle
\section{Introduction}
In this paper all graphs are assumed to be simple unless otherwise indicated.

A fundamental characterization of bipartite graphs, proved by K\"onig
\cite{konig}, is that a graph is bipartite if and only if it contains
no odd cycle. Hsu, Ikura, and Nemhauser~\cite{HIN}, and independently
Maffray~\cite{maffray}, generalized this result, giving the following
structural characterization of the class $\gee{1}$ of graphs
containing no odd cycles of length longer than 3. Here, a \emph{block}
of a graph is a \newstuff{maximal connected subgraph of $G$ having no cut
vertex}, and the \emph{join} $G \join H$ of two graphs $G$ and $H$ is
the graph obtained from their disjoint union by adding all edges
between vertices of $G$ and vertices of $H$.

\begin{theorem}[Hsu--Ikura--Nemhauser~\cite{HIN}, Maffray~\cite{maffray}]\label{thm:hin}
  A graph $G$ lies in $\gee{1}$ if and only each of its blocks $B$ satisfies
  one of the following conditions:
  \begin{itemize}
  \item $B$ is bipartite, or
  \item $B \iso K_4$, or
  \item $B \iso K_2 \join \overline{K_r}$ for some $r \geq 1$.
  \end{itemize}
\end{theorem}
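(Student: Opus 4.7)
The plan is to prove the backward direction by direct inspection and the forward direction by finding a triangle in any non-bipartite block and analyzing how the other vertices must attach to it.

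For the backward direction, since every cycle of $G$ lies within a single block, it suffices to check each block type: bipartite blocks have no odd cycles, $K_4$ has only triangles as odd cycles, and in $K_2 \join \overline{K_r}$ every cycle either uses the $K_2$-edge (and must close through a common neighbor, giving a triangle) or alternates between the two sides of the join (giving even length).

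For the forward direction, let $B$ be a non-bipartite block, so $B$ contains an odd cycle, which by hypothesis is a triangle $T = \{a, b, c\}$. My central claim is that every $x \in V(B) \setminus T$ is adjacent to at least two vertices of $T$. To prove this, I would use $2$-connectivity (via a Menger / Fan Lemma argument) to obtain two internally disjoint paths $P_1, P_2$ from $x$ to distinct vertices of $T$, say $a$ and $b$, whose interiors are disjoint from $T$. Then $P_1 \cup P_2 \cup \{ab\}$ and $P_1 \cup P_2 \cup \{ac, cb\}$ are cycles of opposite parity, one of which is odd and hence a triangle; the longer cycle has length at least $4$ and cannot be the triangle, so the shorter one is, forcing $|P_1| = |P_2| = 1$ and hence $x$ adjacent to both $a$ and $b$.

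With this claim in hand, a short case analysis determines $B$. If some $x \notin T$ is adjacent to all three vertices of $T$, then $\{a, b, c, x\}$ spans a $K_4$; any fifth vertex $y$ would be adjacent to some pair of $T$, say $a$ and $b$, and then $x, a, y, b, c, x$ would be an explicit odd $5$-cycle, contradicting the hypothesis, so $B = K_4$. Otherwise every $x \notin T$ has exactly two neighbors in $T$; partitioning such vertices into classes $X_a, X_b, X_c$ according to the missed triangle vertex, any two vertices drawn from distinct classes form an explicit $5$-cycle together with $T$, so at most one class can be nonempty. Taking that class to be $X_c$ (vertices adjacent to $a$ and $b$ but not $c$), the set $X_c \cup \{c\}$ is independent (an edge $xy$ within $X_c$ yields the $5$-cycle $a, c, b, x, y, a$, and $c$ is nonadjacent to $X_c$ by definition), and every vertex of this set is adjacent to both $a$ and $b$, so $B \iso K_2 \join \overline{K_r}$ with $r = |X_c| + 1$. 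The main obstacle is the parity-based claim that every non-triangle vertex has two neighbors in $T$; afterwards, each unwanted configuration is ruled out by an explicit $5$-cycle.
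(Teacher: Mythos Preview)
The paper does not actually prove Theorem~\ref{thm:hin}: it is quoted from Hsu--Ikura--Nemhauser and Maffray and used as a black box (in Section~\ref{sec:main} the case $\sizeof{C}=3$ of the proof of Theorem~\ref{thm:main} is dispatched by a direct appeal to Theorem~\ref{thm:hin}). So there is no in-paper argument to compare your proposal against.

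That said, your proposed proof is correct and complete. The backward direction is fine. For the forward direction, the key step---using a two-path fan from $x$ to the triangle $T$ and the parity trick with the two ways to close up through $T$---cleanly forces $\sizeof{P_1}=\sizeof{P_2}=1$, giving every $x\notin T$ at least two neighbours in $T$. Your subsequent case analysis is sound: in the ``all three neighbours'' case the explicit $5$-cycle $x,a,y,b,c,x$ works for any choice of the pair $\{a,b\}\subseteq N(y)\cap T$ because $x$ is adjacent to all of $T$; in the ``exactly two neighbours'' case the $5$-cycles you exhibit correctly eliminate both the possibility of two nonempty classes and of an edge inside the surviving class, and the case $V(B)=T$ is absorbed as $r=1$. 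One small cosmetic point: in the last case you should note (as you implicitly do) that no edges of $B$ have been left unaccounted for, so the identification $B\iso K_2\join\overline{K_r}$ really is an isomorphism of graphs, not just an inclusion.
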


In this paper we study graphs where some longer odd cycles are allowed. Let $\gstar$ be the class of graphs $G$ in which odd cycles intersect in at most one edge, i.e., for any distinct odd cycles $C_1, C_2$ in $G$, we have
$\sizeof{E(C_1) \cap E(C_2)} \leq 1$.  Since any two distinct
triangles in a graph intersect in at most one edge, we immediately
have that $\gee{1}\subseteq \gstar$. Building on Theorem \ref{thm:hin}, our first result is the following structural
characterization of the graphs in $\gstar$. Here, for positive integers $p_1, \ldots, p_k$, the $\Theta$-graph
  $\Theta_{p_1, \ldots, p_k}$ is the graph obtained from a pair of
  vertices $\{x_1,x_2\}$ joined by $k$ internally disjoint paths, with
  the $i$th path containing $p_i$ edges. (In particular, if some
  $p_i = 1$ then the corresponding path is just an edge joining $x_1$
  and $x_2$.) Figure~\ref{fig:theta} shows $\Theta_{1,2,4}$.

\begin{figure}
  \centering
  \begin{tikzpicture}
    \lpoint{x_1} (u) at (0cm, 0cm) {};
    \rpoint{x_2} (v) at (4cm, 0cm) {};
    \draw (u) -- (v);
    \apoint{} (a1) at (2cm, -1cm) {};
    \apoint{} (b1) at (1cm, 1cm) {};
    \apoint{} (b2) at (2cm, 1cm) {};
    \apoint{} (b3) at (3cm, 1cm) {};
    \draw (u) -- (a1) -- (v);
    \draw (u) -- (b1) -- (b2) -- (b3) -- (v);
  \end{tikzpicture}
  \caption{The graph $\Theta_{1,2,4}$.}
  \label{fig:theta}
\end{figure}
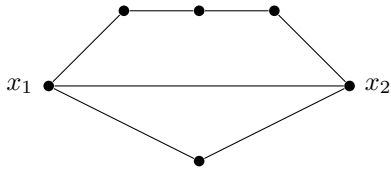
\begin{theorem}\label{thm:main}
  A graph $G$ lies in $\gstar$ if and only if each of its blocks $B$ satisfies
  one of the following conditions:
  \begin{itemize}
  \item $B$ is bipartite, or
  \item $B \iso K_4$, or
  \item $B \iso \Theta_{1, p_1, \ldots, p_r}$, where $p_1, \ldots, p_r$ are even.
  \end{itemize}
\end{theorem}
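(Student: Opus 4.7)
The forward direction is a routine check: in $\Theta_{1,p_1,\ldots,p_r}$ with each $p_i$ even, the odd cycles are exactly the $r$ cycles consisting of the length-$1$ edge together with one even-length path, and any two share only that edge; the four triangles of $K_4$ pairwise share one edge; bipartite blocks have no odd cycles. Since any odd cycle lies within a single block, $G\in\gstar$. For the converse it suffices to show that every $2$-connected $B \in \gstar$ is bipartite, $K_4$, or of the stated Theta form.

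If $B$ is bipartite we are done, so assume $B$ contains an odd cycle. I would first handle the case $K_4 \subseteq B$ by showing $B = K_4$. Suppose for contradiction that some vertex $w$ lies outside a copy of $K_4$ on $\{a,b,c,d\}$. By the fan lemma for $2$-connected graphs there are internally disjoint paths $P_{wx}, P_{wy}$ from $w$ to two distinct $K_4$-vertices $x,y$; set $\ell := |P_{wx}| + |P_{wy}| \geq 2$. Among the $x$-$y$ paths inside $K_4$ are one of length $1$ (the edge $xy$), two of length $2$, and one of length $3$, producing cycles through $w$ of lengths $\ell+1, \ell+2, \ell+2, \ell+3$. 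Regardless of the parity of $\ell$, two of these cycles are odd, and they share all $\ell \geq 2$ edges of $P_{wx} \cup P_{wy}$, contradicting $B\in\gstar$.

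For the remaining case, $B$ is $2$-connected, non-bipartite, and $K_4$-free, and the plan is an ear decomposition starting from a shortest odd cycle $C_0$ of length $2k+1$. If $B = C_0$ then $B = \Theta_{1, 2k}$. Otherwise for the first ear $P_1$ with endpoints $a,b\in V(C_0)$: if $a, b$ are non-adjacent on $C_0$ then both $C_0$-arcs between them have length $\geq 2$, and a parity check singles out one of the new cycles (formed by $P_1$ together with a $C_0$-arc) as odd with $\geq 2$ shared edges with $C_0$; if $a,b$ are adjacent via an edge $e$ but $|P_1|$ is odd, the cycle $P_1 \cup (C_0 - e)$ is odd and shares the $2k \geq 2$ edges of $C_0 - e$ with $C_0$. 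Hence $a,b$ must be adjacent via some edge $e^*$ of $C_0$ and $|P_1|$ must be even, giving $B_1 = \Theta_{1, 2k, |P_1|}$.

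Continuing inductively, an analogous case analysis on each subsequent ear $P_i$ shows every alternative attachment (at a vertex of $V(C_0) \setminus \{a,b\}$, at an internal vertex of a prior ear, or between two such vertices) produces two odd cycles whose intersection exceeds one edge. The one exotic configuration that survives these parity/intersection constraints --- attaching a length-$1$ ear from a vertex of $V(C_0) \setminus \{a, b\}$ to the middle vertex of a length-$2$ ear when $|C_0| = 3$ --- produces $K_4$ as a subgraph, and is excluded by the $K_4$-free hypothesis. Hence every ear is between $a$ and $b$ with even length, giving $B = \Theta_{1, 2k, |P_1|, \ldots, |P_m|}$ in the required form. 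The main obstacle is this ear-by-ear case analysis: each candidate attachment yields several new cycles whose parities and pairwise intersections with old and new odd cycles must all be checked, and the $K_4$-free hypothesis is essential for ruling out the diagonal attachment.
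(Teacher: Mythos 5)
Your proposal is correct in outline but takes a genuinely different route from the paper. The paper fixes a \emph{longest} odd cycle $C$, disposes of the case $\sizeof{C}=3$ by citing the Hsu--Ikura--Nemhauser/Maffray characterization of $\gee{1}$ (Theorem~\ref{thm:hin}), and for $\sizeof{C}\geq 5$ works with the ``touching sets'' $T(v)$ of vertices outside $C$: it shows each $T(v)$ is a clique, that $T$ is constant on $V(G)-C$ with value an adjacent pair $\{x_1,x_2\}$, that external $x_1,x_2$-paths are even, and that each component of $G-C$ is a single path. Your argument instead is self-contained: you handle blocks containing $K_4$ directly via the fan lemma (a nice parity argument the paper does not need, since it inherits the $K_4$ case from Theorem~\ref{thm:hin}), and then run an open ear decomposition from a shortest odd cycle, arguing ear by ear that every attachment other than an even ear between the fixed adjacent pair $\{a,b\}$ creates two odd cycles sharing at least two edges. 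What your approach buys is independence from the cited characterization of $\gee{1}$; what it costs is exactly the case analysis you flag as the main obstacle --- attachments at $a$ or $b$ versus an internal vertex, two internal vertices of the same spoke, or internal vertices of different spokes --- each requiring a parity check against several existing odd cycles. I verified that these cases do all close (in each, one of the new cycles is odd and overlaps some hub cycle $ab\cup Q_j$ in at least two edges), and that the only surviving configuration is a length-$1$ ear joining the midpoints of two length-$2$ spokes, which always yields a $K_4$ on $\{a,b,u,v\}$; note this survivor is slightly more general than you state (the two spokes need not include $C_0$, so it can arise even when $\sizeof{C_0}>3$), but the $K_4$-free hypothesis excludes it uniformly, so the gap is only one of bookkeeping in the eventual write-up, not of substance. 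The paper's global ``all external vertices touch the same adjacent pair'' lemma is the device that lets it avoid this ear-by-ear bookkeeping entirely.
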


Since $K_2 \join \overline{K_r} \iso \Theta_{1, 2,\ldots,2}$, with the
$2$ repeated $r$ times, we see that the blocks permitted in
Theorem~\ref{thm:main} generalize the blocks permitted in
Theorem~\ref{thm:hin}, as required by the inclusion $\gee{1}\subseteq \gstar$. We prove this characterization in Section~\ref{sec:main}.

We use our structural characterization of the graphs in $\gstar$ to
prove a result about the list chromatic index of these graphs.  A
\emph{list assignment} to the edges of a graph $G$ is a function $\lz$
that assigns each edge $e \in E(G)$ a list of colors
$\lz(v)$. (Typically one uses the letter $L$ to denote a list
assignment; we use $\ell$ to avoid conflict with the notation $L(G)$
for line graphs.) An \emph{$\lz$-edge-coloring} of $G$ is a function
$\phi$ defined on $E(G)$ such that $\phi(e) \in \lz(e)$ for all $e$
and such that $\phi(e_1) \neq \phi(e_2)$ whenever $e_1,e_2$ are
adjacent.  A \emph{$k$-edge-coloring} of $G$ is an $\lz$-edge-coloring
for the list assignment with $\lz(e) = \{1, \ldots, k\}$ for all
$e \in E(G)$; the \emph{chromatic index} of $G$, written $\chi'(G)$,
is the smallest nonnegative $k$ such that $G$ admits a
$k$-edge-coloring.  If $f : E(G) \to \nats$ and $G$ has a proper
$\lz$-edge-coloring whenever $\sizeof{\lz(e)} \geq f(e)$ for all $e$,
we say that $G$ is \emph{$f$-edge-choosable}. In particular, if $G$ is
$f$-edge-choosable when $f(e) = k$ for all $e$, we say that $G$ is
\emph{$k$-edge-choosable}. The \emph{list chromatic index} of $G$,
written $\lc'(G)$, is the smallest nonnegative $k$ such that $G$ is
$k$-edge-choosable.

It is clear that $\chi'_l(G)\geq \chi'(G)$ for every graph $G$. The
list-edge-coloring conjecture (attributed to many sources, some as
early as 1975; see \cite{toft-jensen}) asserts that equality always
holds. In a breakthrough result in 1995, Galvin \cite{galvin} proved
the conjecture for bipartite graphs.  Peterson and Woodall
\cite{PW1,PW2} later extended this to the class $\gee{1}$.
Here we prove, in Section 3, that the list-edge-coloring conjecture
holds for $\gstar$.
\begin{theorem}\label{thm:bigmain} If $G\in\gstar$, then $\chi'_l(G)=\chi'(G)$.
\end{theorem}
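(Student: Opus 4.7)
The plan is to use Theorem~\ref{thm:main} to reduce to proper (list-)edge-coloring of individual blocks, and then handle each of the three permitted block types in turn. Since distinct blocks of a graph share at most one vertex and hence have pairwise non-adjacent edge sets, any proper (list-)edge-coloring of $G$ is obtained by independently coloring each block, so $\chi'_l(G) = \max_B \chi'_l(B)$ and $\chi'(G) = \max_B \chi'(B)$ where $B$ ranges over the blocks. It thus suffices to verify $\chi'_l(B) = \chi'(B)$ for each of the three block types permitted by Theorem~\ref{thm:main}.

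For bipartite $B$ this is precisely Galvin's theorem~\cite{galvin}. For $B \iso K_4$ the identity $\chi'_l(K_4) = 3$ is well known; it can, for example, be proved by exhibiting a kernel-perfect orientation of $L(K_4) \iso K_{2,2,2}$ with maximum out-degree $2$ and invoking the Bondy--Boppana--Siegel lemma. The substantive case is $B \iso \Theta_{1, p_1, \ldots, p_r}$ with all $p_i$ even, where $\Delta(B) = r+1$. To determine $\chi'(B)$, I would delete the edge $e_0 = x_1 x_2$: the remaining graph is bipartite of maximum degree $r$ and hence $r$-edge-colorable by K\"onig's theorem, and an $(r+1)$-edge-coloring of $B$ results by assigning $e_0$ a new color. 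So $\chi'(B) = r+1$, and the task is to upgrade this to list-edge-choosability.

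My approach to the list version is the Galvin kernel method: construct a kernel-perfect orientation of $L(B)$ in which every vertex has out-degree at most $r$, and conclude $(r+1)$-edge-choosability from the Bondy--Boppana--Siegel lemma. A natural scheme would orient each path $P_i$ consistently from $x_1$ toward $x_2$, and orient the two cliques at the hub vertices $x_1$ and $x_2$ inside $L(B)$ so that the vertex corresponding to $e_0$ is a local sink. The main obstacle is verifying kernel-perfectness on every induced subdigraph of $L(B)$: induced odd cycles in $L(B)$ arise from combinations of path fragments with hub edges, and these are precisely the configurations where kernels can fail. I expect the verification to proceed by case analysis on the interaction of such induced subgraphs with the two hubs, and to rely essentially on the parity hypothesis on each $p_i$. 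The qualification ``when $\Delta(G) \geq 4$'' in the abstract for the stronger orientation-based statement further suggests that the small-$\Delta$ instances (odd cycles, $K_4$, and the cubic $\Theta_{1, p_1, p_2}$-blocks) will need to be handled by a separate ad hoc argument, perhaps using known list-edge-choosability results for subcubic graphs.
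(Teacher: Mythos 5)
The reduction to blocks at the start of your argument is incorrect, and repairing it is the central difficulty of the paper. Two distinct blocks of $G$ can share a cut vertex $z$, and an edge of one block incident to $z$ is adjacent to every edge of the other block incident to $z$; so the edge sets of distinct blocks are \emph{not} pairwise non-adjacent, and the blocks cannot be colored independently. Indeed $\chi'(G) = \max_B \chi'(B)$ is false in general: for two triangles sharing a vertex, every block has chromatic index $3$ while the graph has maximum degree $4$, so $\chi'(G) \geq 4$; the star $K_{1,3}$ is an even simpler counterexample. The same failure afflicts $\chi'_l$. The paper's workaround is to prove a \emph{local} strengthening for each block: a block is required to be $f_{k,v}$-edge-orientable for every vertex $v$, where $f_{k,v}(e)=d(v)$ for edges $e$ incident to $v$ and $f_{k,v}(e)=k$ otherwise (``strongly $k$-edge-orientable''). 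This reserved out-degree budget at each vertex is exactly what allows kernel-perfect orientations of the blocks to be glued together at cut vertices (Lemma~\ref{lem:combineblocks}, via Lemma~\ref{lem:arrow}) into a $k$-kernel-perfect orientation of $L(G)$ for $k \geq \Delta(G)$. Your plan, which only asks each block $B$ to satisfy $\chi'_l(B)=\chi'(B)$, records no information at the cut vertices and so cannot be combined.

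Your treatment of the individual block types is otherwise in the right spirit (Galvin for bipartite blocks, kernel-perfect orientations of bounded out-degree for $K_4$ and the theta-blocks, deleting $x_1x_2$ to reduce a theta-graph to a bipartite graph), and you correctly anticipate that the subcubic cases need separate treatment: the paper shows that $K_2 \join \overline{K_2}$ has no $f_{3,v}$-kernel-perfect orientation when $d(v)=2$ and exhibits a subcubic graph in $\gee{1}$ that is not $3$-edge-orientable, so for $\Delta(G)=3$ it falls back on a direct list-coloring induction on blocks (peeling off a leaf block, deleting from its lists the colors used at the cut vertex, and handling $K_2\join\overline{K_2}$ by an ad hoc case analysis), together with the Cariolaro--Lih result for $K_4$. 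But as written, the first step of your proof is a genuine gap that invalidates the overall structure.
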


As $\gee{1}\subseteq \gstar$, Theorem~\ref{thm:bigmain} extends the
work of the above-mentioned authors, in particular by allowing odd
cycles of any length. However, while our proof is limited to simple graphs, Galvin's proof also holds for bipartite multigraphs. Peterson and
Woodall's proof works for multigraphs too: they proved that $\chi'_l(G)=\chi'(G)$ for all multigraphs $G$ whose underlying simple graph is in $\gee{1}$. It would be desirable to extend Theorem \ref{thm:bigmain} to all multigraphs $G$ whose underlying simple graph is in $\gstar$, but we have not been able to do so.

In looking at Theorem \ref{thm:bigmain}, recall that Vizing's Theorem
\cite{vizing-thm} says that every graph $G$ has chromatic index
$\Delta(G)$ or $\Delta(G)+1$. We shall see that every connected graph
in $\gstar$, aside from odd cycles, satisfies
$\chi'_l(G)=\chi'(G)=\Delta(G)$. When $\Delta(G)\geq 4$, we actually
prove something stronger than the fact that $G\in\gstar$ is
$\Delta(G)$-edge-choosable: we prove a result about orienting $L(G)$
which implies results for two generalized notions of choosability. Before
describing these, we first
discuss the connection between coloring and kernels in digraphs.

An \emph{orientation} of a graph $G$ is any digraph obtained by
replacing each edge $uv \in E(G)$ with the arc $(u,v)$, the arc
$(v,u)$, or both of these arcs. A \emph{kernel} in a digraph $D$ is an
independent set of vertices $S$ such that every vertex in $D-S$ has an
out-neighbor in $S$. A digraph $D$ is said to be \emph{kernel-perfect}
if every induced subdigraph of $D$, including $D$ itself, has a
kernel. Coloring and kernels are linked by the following lemma of
Bondy, Boppana and Siegel (see \cite{galvin}); here we state the lemma
for line graphs (edge-coloring) only.

\begin{lemma}[Bondy--Boppana--Siegel]\label{lem:BBS}
  If $D$ is a kernel-perfect orientation of a line graph $L(G)$ and $f(e) \geq 1 + d_D^+(e)$
  for all $e \in E(G)$, then $G$ is $f$-edge-choosable.
\end{lemma}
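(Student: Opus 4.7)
The plan is to prove the lemma by induction on $|V(L(G))| = |E(G)|$, peeling off one color at a time using kernel-perfectness; the base case $E(G) = \emptyset$ is trivial.

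For the inductive step, fix any list assignment $\lz$ with $|\lz(e)| \geq 1 + d_D^+(e)$ for all $e$. I would pick a color $c$ appearing in at least one list and set $E_c = \{e \in E(G) : c \in \lz(e)\}$. Viewed as a subset of $V(L(G))$, $E_c$ induces a subdigraph of $D$, which by kernel-perfectness has a kernel $K$. Since $K$ is an independent set in $L(G)$, its elements form a matching in $G$, so coloring every edge in $K$ with $c$ yields a valid partial coloring.

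Next, I would delete $K$ from both $G$ and $D$ and remove $c$ from every remaining list, obtaining $\lz'(e) = \lz(e) \setminus \{c\}$; the induced subdigraph $D' = D - K$ remains kernel-perfect. To apply induction to $G - K$ with $\lz'$, I must verify $|\lz'(e)| \geq 1 + d_{D'}^+(e)$ for every remaining edge $e$. If $e \in E_c \setminus K$, then by the kernel property $e$ has an out-neighbor in $K$, so $d_{D'}^+(e) \leq d_D^+(e) - 1$, and since $|\lz'(e)| = |\lz(e)| - 1$ the inequality is preserved. If $e \notin E_c$, the list is unchanged while $d_{D'}^+(e) \leq d_D^+(e)$, so the inequality is inherited directly. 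The resulting $\lz'$-edge-coloring of $G-K$ then extends to all of $G$ by giving the edges of $K$ the color $c$, with no conflict since $c$ has already been purged from every remaining list.

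The argument is short and standard, and I do not expect any real obstacle. The two points one must not overlook are (i) translating independence in $L(G)$ into a matching in $G$, so that the single color $c$ may be safely applied to all of $K$ simultaneously, and (ii) invoking the defining property of a kernel to guarantee that every non-kernel vertex in $D[E_c]$ loses at least one out-neighbor when $K$ is removed, which is exactly what makes the inductive inequality tight. With both observations in place the induction closes immediately.
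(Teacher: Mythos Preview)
Your proof is correct and is the standard argument for this lemma. Note, however, that the paper does not give its own proof of this statement: it is quoted as a known result of Bondy, Boppana, and Siegel, with a reference to Galvin~\cite{galvin}, so there is no in-paper proof to compare against.
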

\begin{remark}
  A word of caution is in order regarding our use of the word
  ``orientation''. In an orientation of a line graph $L(G)$, we
  explicitly allow the possiblity that both of the arcs $(e,f)$ and
  $(f,e)$ are present, even when $e$ and $f$ are not parallel
  edges. This possibility is also allowed by Maffray~\cite{maffray}.
  However, some papers in the literature implicitly \emph{forbid} such
  $2$-cycles, such as the paper of Borodin, Kostochka, and
  Woodall~\cite{bkw} generalizing results of \cite{maffray}.
  We mention the distinction here in the hope of avoiding future
  confusion about which notion of ``orientation'' is used in this paper.
\end{remark}

Let $H$ be a graph, and let $f : V(H) \to \nats$. We say an
orientation $D$ of $H$ is \emph{f-kernel-perfect} if it is kernel-perfect
and $f(v) \geq 1 + d_D^+(v)$ for all $v \in V(H)$; if $f(v)=k$ for all
$v$ then we say \emph{k-kernel-perfect}. We say a graph $G$ is
\emph{f-edge-orientable} if $L(G)$ admits an $f$-kernel-perfect
orientation. Using this terminology, the above lemma says that
$f$-edge-orientability implies $f$-edge-choosability. In
Section~\ref{sec:edgecol}, we prove the following result:
\begin{theorem}\label{thm:orientgstar}
  If $G \in \gstar$ has $t \geq \max\{4, \Delta(G)\}$, then
  $G$ is $t$-edge-orientable.
\end{theorem}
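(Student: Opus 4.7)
The plan is to induct on the number of blocks of $G$, using the structural classification in Theorem~\ref{thm:main} to handle the base case. When $G$ is $2$-connected, Theorem~\ref{thm:main} asserts that $G$ is bipartite, isomorphic to $K_4$, or isomorphic to $\Theta_{1, p_1, \ldots, p_r}$ with all $p_i$ even, and I would construct the required $t$-kernel-perfect orientation of $L(G)$ in each case. When $G$ has multiple blocks, I would strip a leaf block and combine the inductive orientation on the reduced graph with the base-case orientation on the leaf block.

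For the $2$-connected base cases, bipartite $G$ is handled directly by Galvin's theorem, which gives a $\Delta(G)$-kernel-perfect orientation of $L(G)$; since $t \geq \Delta(G)$, this is $t$-kernel-perfect. For $G \iso K_4$ (so $\Delta(G) = 3$ and $t \geq 4$), I would produce a $4$-kernel-perfect orientation of $L(K_4) \iso K_{2,2,2}$ by an explicit construction. The most interesting case is $G \iso \Theta_{1, p_1, \ldots, p_r}$ with all $p_i$ even: set $e_0 = x_1 x_2$ and observe that $G - e_0$ is bipartite with $\Delta = r$, with $x_1, x_2$ on the same side of its bipartition because the $p_i$ are even, so Galvin supplies an $r$-kernel-perfect orientation $D'$ of $L(G - e_0)$. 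I would extend $D'$ to an orientation $D$ of $L(G)$ by adding an arc $e \to e_0$ for every edge $e$ of $G - e_0$ incident to $\{x_1, x_2\}$, with no arcs out of $e_0$. Then $e_0$ has out-degree $0$ and every other edge gains at most one out-arc, so the maximum out-degree is at most $r \leq t - 1$. Kernel-perfectness follows by case analysis: an induced sub-digraph $H$ of $D$ with $e_0 \notin H$ is handled by $D'$, and when $e_0 \in H$, letting $H^*$ be the induced sub-digraph of $H$ on the vertices that are not $L(G)$-neighbors of $e_0$, a kernel $K'$ of $H^*$ (which exists by $D'$) yields a kernel $K' \cup \{e_0\}$ of $H$, since $e_0$'s only $L(G)$-neighbors are edges at $\{x_1, x_2\}$ and every such edge in $H$ has its out-arc directed to $e_0$.

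For the inductive step, pick a leaf block $B$ of $G$ with cut vertex $v$ and set $G' = G - (V(B) \setminus \{v\})$. Induction supplies a $t$-kernel-perfect orientation $D'$ of $L(G')$, and the base case supplies a $t$-kernel-perfect orientation $D_B$ of $L(B)$. The line graph $L(G)$ consists of the (vertex-disjoint) sub-line-graphs $L(G')$ and $L(B)$ together with a complete bipartite cross-edge set between $E_B(v)$ and $E_{G'}(v)$; since $|E_B(v)| + |E_{G'}(v)| = d_G(v) \leq t$, the out-degree budget at the clique $Q_v$ of $L(G)$ is tight but sufficient for a coordinated orientation of the cross-edges.

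The main obstacle will be this combining step. To keep total out-degree at each vertex of $L(G)$ within $t - 1$, the cross-edge orientation must allocate the slack present in $D_B$ and $D'$ at edges incident to $v$; this may require strengthening the base-case constructions to produce orientations with controlled out-degree at the cut vertex, or building the block orientations in a coordinated way from the start. Verifying kernel-perfectness for induced sub-digraphs of $L(G)$ that span both blocks is the most delicate step, and I expect this to be handled by a stable-matching argument in the spirit of Galvin's original proof, lifted to the block-cut-tree structure of $G$.
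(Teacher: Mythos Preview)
Your overall strategy---reduce to blocks via Theorem~\ref{thm:main}, handle each block type separately, then glue along cut vertices---is exactly the paper's. The base cases are also close: Galvin for bipartite blocks, an explicit orientation for $K_4$, and for $\Theta_{1,p_1,\dots,p_r}$ you delete $e_0=x_1x_2$, apply Galvin to the bipartite remainder, and make $e_0$ a sink. (The paper argues the $\Theta$-case differently, by induction on $r$, but your construction correctly produces a plain $t$-kernel-perfect orientation, and your kernel-perfectness check is essentially the paper's Lemma~\ref{lem:arrow} with $X=\{e_0\}$.)

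The real gap is the one you flag but do not close: the combining step. A bare $t$-kernel-perfect orientation of $L(B)$ and of $L(G')$ leaves no room for the cross-edges at the cut vertex $v$, since edges in $E_B(v)$ may already have out-degree $t-1$. The paper resolves this with the strengthened invariant you anticipated: it defines $G$ to be \emph{strongly $t$-edge-orientable} if for \emph{every} $v$ there is an $f_{t,v}$-kernel-perfect orientation, where $f_{t,v}(e)=d(v)$ for $e$ incident to $v$ and $f_{t,v}(e)=t$ otherwise. Lemma~\ref{lem:combineblocks} then orients every cross-edge from $E_B(v)$ toward $E_{G'}(v)$, so that an $e\in E_B(v)$ ends with out-degree at most $(d_B(v)-1)+d_{G'}(v)=d_G(v)-1\le t-1$, and kernel-perfectness of the glued orientation follows from the elementary Lemma~\ref{lem:arrow} rather than any stable-matching argument. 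Carrying the strong invariant through the base cases is where the real work lies: in particular your $\Theta$-graph orientation fails the bound $d^+(e)\le d(v)-1$ when $v$ is a degree-$2$ vertex adjacent to $x_1$, since the edge $vx_1$ picks up an extra out-arc toward $e_0$. This is why the paper's Lemmas~\ref{lem:crowngalvin-tip} and~\ref{lem:thetagalvin} instead delete an edge away from $v$ and need the hypothesis $t\ge 4$.
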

Proving $f$-edge-orientability of a graph also implies two properties
stronger than $f$-edge-choosability: $(mf:m)$-edge-choosability and
$f$-edge-paintability.

An \emph{$m$-tuple coloring} of a graph $G$ is a function that assigns
to each vertex $v \in V(G)$ a set $\phi(v)$ of $m$ colors such that
adjacent vertices receive disjoint sets. For $f:V(G) \to \nats$, we
say that $G$ is \emph{$(f:m)$-choosable} if $G$ has an $m$-tuple
coloring with $\phi(v) \subset \ell(v)$ whenever $\ell(v)$ is a list
assignment with $\sizeof{\ell(v)} \geq f(v)$ for all $v$. In
particular, $(f:1)$-choosability is just ordinary
$f$-choosability. This extension of list coloring was first introduced
by Erd\H os, Rubin, and Taylor in \cite{rubin} alongside
the usual notion of $f$-choosability.

A longstanding conjecture from \cite{rubin} is that every
$k$-choosable graph is $(km:m)$-choosable for all $m$;
in particular, this conjecture together with the list-edge-coloring
conjecture would imply that every graph $G$ is $(m\chi'(G):m)$-edge-choosable
for all $m \geq 1$. It was observed by Galvin~\cite{galvin} that the lemma
of Bondy, Boppana, and Siegel applies to $m$-tuple coloring as well:
in our language, if $G$ is \galvin{f}, then $G$ is $(mf:m)$-edge-choosable
for all $m \geq 1$. Thus, our Theorem~\ref{thm:orientgstar} implies that
if $G \in \gstar$ with $\Delta(G) \geq 4$, then $G$ is $(m\Delta(G):m)$-edge-choosable.

\emph{Paintability}, also known as \emph{online list coloring}, is
another extension of list coloring. Paintability was introduced by
Schauz~\cite{schauz} and independently by Zhu~\cite{zhu} and is defined
by the following $2$-player game on a graph $G$.

The online list coloring game is played by two players, Lister and
Painter, over several turns. On each turn, Lister \emph{marks} some
nonempty set $M$ of vertices, which models making the same color
available at each vertex of $M$. Then, Painter selects an independent
set $S \subset M$ and deletes all vertices of $S$ from the graph,
which models assigning the given color to the vertices of $S$ in a
proper coloring of $G$. Lister then marks some subset of the remaining
vertices, and the game continues in this manner until all vertices
have been deleted. Given a function $f : V(G) \to \nats$, the graph
$G$ is said to be \emph{$f$-paintable} if Painter has a strategy
ensuring that each vertex $v$ is marked at most $f(v)$ times before
being deleted. When $f(v) = k$ for all $v \in V(G)$, we also say
\emph{$k$-paintable}.

Given a fixed list assignment $\ell$, one strategy available to Lister
is to select, on the $i$th round, all the vertices $v$ with $i \in \ell(v)$;
if $G$ lacks an $\ell$-coloring, then this strategy wins for Lister.
Thus, every $f$-paintable graph is necessarily $f$-choosable.

Given $f : E(G) \to \nats$, we say that $G$ is
\emph{$f$-edge-paintable} if its line graph $L(G)$ is
$f$-paintable. Schauz~\cite{schauz} proved that Lemma~\ref{lem:BBS}
also works to prove $f$-edge-paintability, so that every \galvin{k}
graph is $k$-edge-paintable. Hence, Theorem~\ref{thm:orientgstar}
implies that if $G \in \gstar$ with $\Delta(G) \geq 4$, then $G$ is
$\Delta(G)$-edge-paintable.  (In fact, one can also define
$(mf:m)$-paintability in a manner analogous to $(mf:m)$-choosability,
where each vertex must be ``deleted'' $m$ times before it is actually
removed. Lemma~\ref{lem:BBS} then also shows that every \galvin{k}
graph is $(mf:m)$-edge-paintable.)

In Section~\ref{sec:sharp} we construct a subcubic graph in
$\gee{1}$ that is not $3$-edge-orientable, which shows that the
lower bound on $t$ in Theorem~\ref{thm:orientgstar} is sharp.

\section{Characterizing graphs in $\gstar$}\label{sec:main}
It is easy to see that a graph $G$ lies in $\gstar$ if and only if all
its blocks lie in $\gstar$.  Thus, in order to prove
Theorem~\ref{thm:main}, it suffices to prove the claim for
$2$-connected graphs.
\begin{lemma}
  If $G$ is $2$-connected, then $G \in \gstar$ if and only if one
  of the following conditions holds:
  \begin{itemize}
  \item $G$ is bipartite, or
  \item $G \iso K_4$, or
  \item $G \iso \Theta_{1, p_1, \ldots, p_r}$, where $p_1, \ldots, p_r$ are even.
  \end{itemize}
\end{lemma}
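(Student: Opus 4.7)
The plan is to prove both directions. For the ``if'' direction, we verify directly that each of the three listed graphs lies in $\gstar$: bipartite graphs have no odd cycles; in $K_4$, any two of the four triangles share exactly one edge; and in $\Theta_{1,p_1,\dots,p_r}$ with all $p_i$ even, the odd cycles are precisely the $r$ cycles of the form $\{e\}\cup P_i$, where $e$ is the distinguished length-$1$ path and $P_i$ is the $i$th even-length path, and any two such cycles share only $e$ (while any cycle $P_i\cup P_j$ has even length $p_i+p_j$).

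For the ``only if'' direction, assume $G$ is $2$-connected and in $\gstar$; we may also assume $G$ is non-bipartite. If every odd cycle of $G$ is a triangle, then $G\in\gee{1}$, and by Theorem~\ref{thm:hin} applied to the single block $G$ we have $G\iso K_4$ or $G\iso K_2\join\overline{K_r}\iso\Theta_{1,2,\dots,2}$, as required. Otherwise, $G$ contains an odd cycle $C$ of length $2k+1\ge 5$; the goal is to show $G\iso\Theta_{1,2k,\ell_1,\dots,\ell_{r-1}}$ for some even $\ell_j\ge 2$.

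The key lemma is the following ear analysis for $C$: any path $P$ with endpoints $w_1,w_2\in V(C)$ and interior disjoint from $C$ must have even length, with $w_1,w_2$ adjacent on $C$ via a specific edge $e_P$. Indeed, writing the two arcs of $C$ between $w_1,w_2$ as $\alpha$ (odd) and $\beta$ (even, $\ge 2$), exactly one of the two cycles obtained by combining $P$ with an arc of $C$ is odd. If $\ell(P)$ is odd, then this odd cycle uses the $\beta$-arc and shares $\beta\ge 2$ edges with $C$, contradicting $\gstar$; hence $\ell(P)$ is even and the odd cycle uses the $\alpha$-arc, forcing $\alpha=1$. A similar argument shows that two ears of $C$ based at distinct edges would create an odd cycle sharing $2k-1\ge 3$ edges with $C$, so all ears lie over a single common edge $e=uv$ of $C$.

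We then take an open ear decomposition $C=G_0\subset G_1\subset\dots\subset G_n=G$ starting from $C$, and argue by induction that each $G_i\iso\Theta_{1,2k,\ell_1,\dots,\ell_{i-1}}$ with all $\ell_j$ even, where the ear $P_i$ added to $G_{i-1}$ has endpoints at $\{u,v\}$ and even length. The main obstacle is the inductive step, which requires ruling out ears $P_i$ with one or both endpoints in the interior of some previously added $P_j$. For each such configuration we construct an explicit odd cycle of $G_i$ sharing $\ge 2$ edges with $C$ or with some $C_j=\{e\}\cup P_j$; for instance, an ear joining interior vertices of two distinct paths $P_{j_1},P_{j_2}$ typically yields an odd cycle containing the entire long arc of $C$, sharing all $2k\ge 4$ of its edges with $C$, a contradiction. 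These case analyses complete the induction and yield the desired $\Theta$ structure for $G$.
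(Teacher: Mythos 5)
Your proof is correct in outline, and its first half is essentially the paper's argument in different clothing: your key lemma (every ear of $C$ has even length and adjacent endpoints, with all ears over one common edge) is exactly the content of the paper's lemmas on the touching sets $T(v)$, their clique property, and their constancy, proved by the same parity and edge-counting contradictions. Where you genuinely diverge is in deriving the global $\Theta$ structure. The paper fixes the common edge $x_1x_2$ and analyzes the components $H$ of $G-V(C)$ directly: uniqueness of the path in $H$ joining $N_H(x_1)$ to $N_H(x_2)$ (two such paths would give two odd cycles sharing both $x_1x_2$ and $x_1v$) together with $2$-connectivity forces each $H$ to be a bare path, and this takes only two short lemmas. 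You instead induct along an open ear decomposition rooted at $C$, which is a legitimate alternative but pushes all the remaining work into ruling out ears attached to interiors of earlier ears. That case analysis is where your sketch is thin: the phrase ``typically yields an odd cycle containing the entire long arc of $C$'' does not cover all parities, and in the remaining subcases one must instead route the cycle through the edge $uv$ to obtain an odd cycle sharing at least two edges with some $C_j=\{uv\}\cup P_j$, or with $C$ via $uv$ together with part of the long arc. I checked that each configuration --- an ear between two interior vertices of one $P_j$, between interior vertices of distinct $P_j$'s, between a $P_j$-interior vertex and $u$, $v$, or the long arc, and between two vertices of $C$ over an edge other than $uv$ --- does produce such a contradiction, so your route closes; but these cases must be written out in full, since as it stands the inductive step is asserted rather than proved.
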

It is easy to check that all graphs satisfying one of these conditions
lie in $\gstar$. In the rest of this section, we show that if
$G \in \gstar$ is $2$-connected and $G$ is neither bipartite nor
isomorphic to $K_4$, then $G$ is a $\Theta$-graph of the desired form.

Fix some graph $G \in \gstar$ and let $C$ be a longest odd cycle in
$G$; we also use $C$ to refer to the vertex set of this cycle. If
$\sizeof{C} = 3$, then $G \in \gee{1}$, so Theorem~\ref{thm:hin}
immediately implies the claim.  Thus, we may assume that
$\sizeof{C} \geq 5$. Note that $G \in \gstar$ implies that $C$ has no
chords, that is, the induced subgraph $G[C]$ is a cycle.

Let $\cbar = V(G) - C$. If $\cbar = \emptyset$, then $G \iso
\Theta_{1, \sizeof{C}-1}$ and the claim holds, so assume that $\cbar
\neq \emptyset$. For $v \in \cbar$ and $w \in C$, say that $v$
\emph{touches} $w$ if there is a $v,w$-path that intersects $C$ only
at $w$. Let $T(v) = \{w \in C \st \text{$v$ touches $w$}\}$.  Since
$G$ is $2$-connected, we have $\sizeof{T(v)} \geq 2$ for all $v \in
\cbar$.

For $x,y \in C$, an \emph{external $x,y$-path} is an $x,y$-path $P$ with at least two edges
that meets $C$ only at its endpoints.
\begin{lemma}\label{lem:external-path}
  Let $v \in \cbar$ and let $H$ be the component of $G[\cbar]$
  containing $v$. If $x,y \in T(v)$, then there is an external
  $x,y$-path $P$ such that the internal vertices of $P$ all lie in
  $H$.
\end{lemma}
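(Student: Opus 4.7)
The plan is to stitch together the two paths witnessing $x, y \in T(v)$ at the common endpoint $v$, and then extract a simple $x,y$-path from the resulting walk. There is no deep obstacle here; the only subtlety is to verify that after the walk-to-path shortcutting, the internal vertices remain in $V(H)$ as well as in $\cbar$.

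By definition of $T(v)$, I can select a $v,x$-path $P_x$ meeting $C$ only at $x$ and a $v,y$-path $P_y$ meeting $C$ only at $y$. All internal vertices of $P_x$ and $P_y$ lie in $\cbar$, and $v$ itself belongs to $\cbar$. Reversing $P_x$ and concatenating it with $P_y$ at $v$ produces an $x,y$-walk $W$ whose non-endpoint vertices all lie in $\cbar$. The standard shortcutting argument extracts from $W$ a simple $x,y$-path $P$ with $V(P) \subset V(W)$.

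It then remains to check two properties of $P$. First, every internal vertex of $P$ is a non-endpoint vertex of $W$, hence lies in $\cbar$, so $P$ meets $C$ only at $x$ and $y$ and is therefore external. Second, each internal vertex $u$ of $P$ lies on $P_x$ or on $P_y$; in either case, the subpath of that path joining $u$ to $v$ uses only vertices of $\cbar$, so $u$ lies in the same component of $G[\cbar]$ as $v$, namely $H$. Both $P_x$ and $P_y$ themselves witness that $v$ is connected in $G[\cbar]$ to each internal vertex of $W$, so the shortcutting step cannot accidentally drop $P$ into a different component. This gives the required external $x,y$-path.
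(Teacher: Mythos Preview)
Your proof is correct and follows essentially the same approach as the paper: both arguments take the two witnessing paths $P_x$ and $P_y$, observe that together they form a connected subgraph (or walk) whose only $C$-vertices are $x$ and $y$ with all other vertices in $H$, and then extract the desired $x,y$-path. You are simply more explicit about the walk-to-path shortcutting and the verification that internal vertices land in $H$, whereas the paper handles both points in a single sentence.
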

\begin{proof}
  Let $P_x$ be a $v,x$-path and let $P_y$ be a $v,y$-path such that
  $P_x$ and $P_y$ each meet $C$ only at their endpoints, as guaranteed
  by the definition of $T(v)$. Now $P_x \cup P_y$ is a connected
  subgraph of $G$ containing both $x$ and $y$, and the only
  $C$-vertices contained in $P_x \cup P_y$ are $x$ and $y$, with all
  other vertices in $H$. Hence there is an $x,y$-path in
  $P_x \cup P_y$, and such a path is necessarily an external
  $x,y$-path with all internal vertices in $H$.
\end{proof}
\begin{lemma}\label{lem:tclique}
  For all $v \in \cbar$, the set $T(v)$ is a pair of adjacent
  vertices in $C$.
\end{lemma}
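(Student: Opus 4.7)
The plan is to argue by contradiction: assume that $x, y \in T(v)$ are distinct and non-adjacent in $G$, and then construct an odd cycle that shares at least two edges with $C$, contradicting $G \in \gstar$.

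First I would invoke the facts already established in the surrounding text, namely that $\sizeof{C} \geq 5$ and $C$ is chordless. Consequently, if $x, y \in T(v)$ are distinct and non-adjacent in $G$, then they are non-adjacent on $C$ as well, so the two arcs $Q_1, Q_2$ of $C$ between $x$ and $y$ each have length $\ell_i = \sizeof{E(Q_i)} \geq 2$. Since $\ell_1 + \ell_2 = \sizeof{C}$ is odd, exactly one of $\ell_1, \ell_2$ is odd.

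Next I would apply Lemma~\ref{lem:external-path} to produce an external $x,y$-path $P$, whose length $p$ satisfies $p \geq 2$ (since $xy \notin E(G)$ means $P$ has at least one internal vertex). The heart of the argument is a parity observation: because $\ell_1$ and $\ell_2$ have opposite parities, we can choose $i \in \{1,2\}$ so that $p + \ell_i$ is odd. Then $P \cup Q_i$ is an odd cycle, and it meets $C$ in precisely the edges of $Q_i$; that is, in $\ell_i \geq 2$ edges. This contradicts the defining property of $\gstar$, so no such non-edge pair $x,y$ can exist, and $T(v)$ is a clique.

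I do not expect a genuine obstacle here; the proof essentially hinges on one parity trick, which automatically forces one of the two ``candidate'' cycles $P \cup Q_1, P \cup Q_2$ to be odd regardless of the length of $P$. The only thing to verify carefully is that the resulting odd cycle really does share at least two edges with $C$, and this is exactly guaranteed by the inequality $\ell_i \geq 2$ coming from chordlessness of $C$ together with $\sizeof{C} \geq 5$.
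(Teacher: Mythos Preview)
Your proposal is correct and follows essentially the same argument as the paper: assume $T(v)$ contains nonadjacent $x,y$, take an external $x,y$-path $P$ via Lemma~\ref{lem:external-path}, and observe that combining $P$ with one of the two $x,y$-arcs of $C$ yields an odd cycle sharing at least two edges with $C$. The only minor remark is that your appeal to chordlessness and $\sizeof{C}\geq 5$ is unnecessary here---the inequality $\ell_i \geq 2$ follows directly from $x$ and $y$ being nonadjacent in $G$ (hence nonconsecutive on $C$).
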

\begin{proof}
  First we argue that $T(v)$ is a clique.
  Suppose to the contrary that $T(v)$ contains nonadjacent vertices
  $w, z \in C$. Let $Q$ be an external $w,z$-path, as guaranteed by
  Lemma~\ref{lem:external-path}. Taking $P_1$ and $P_2$ to be the two
  internally-disjoint $w,z$-paths in $C$, we see that either
  $Q \cup P_1$ or $Q \cup P_2$ is an odd cycle, as illustrated in
  Figure~\ref{fig:q}. Furthermore, both $Q \cup P_1$ and $Q \cup P_2$
  intersect $C$ in at least two edges, since $w$ and $z$ are
  nonadjacent. Thus, if some $T(v)$ is not a clique then
  $G \notin \gstar$.
  \begin{figure}
    \centering
    \begin{tabular}{cc}
    \begin{tikzpicture}
      \foreach \i in {1,...,5}
      {
        \apoint{} (c\i) at (-90+72*\i : 1cm) {};
      }
      \draw (c1) -- (c2) -- (c3) -- (c4) -- (c5) -- (c1);
      \lpoint{w} (w) at (-90-72 : 1cm) {};
      \rpoint{z} (z) at (-90+72 : 1cm) {}; 
      \draw[very thick] (c1) -- (c5) -- (c4);
      \draw[very thick, stretchy] (w) .. controls ++(-90:2cm) and ++(-90:2cm) .. (z);
      \node at (.2cm, -2.2cm) {$Q$};
    \end{tikzpicture}&
    \begin{tikzpicture}
      \foreach \i in {1,...,5}
      {
        \apoint{} (c\i) at (-90+72*\i : 1cm) {};
      }
      \draw (c1) -- (c2) -- (c3) -- (c4) -- (c5) -- (c1);
      \lpoint{w} (w) at (-90-72 : 1cm) {};
      \rpoint{z} (z) at (-90+72 : 1cm) {}; 
      \draw[very thick] (c1) -- (c2) -- (c3) -- (c4);
      \draw[very thick, stretchy] (w) .. controls ++(-90:2cm) and ++(-90:2cm) .. (z);
      \node at (.2cm, -2.2cm) {$Q$};
    \end{tikzpicture}
    \end{tabular}
    \caption{Two cycles obtained when $T(v)$ is not a clique, for $v \in \cbar$. Wavy lines represent paths;
      thick lines denote cycle edges.}
    \label{fig:q}
  \end{figure}
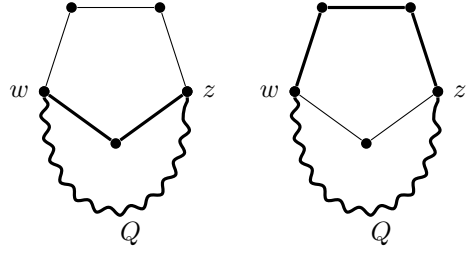

  Now, since $T(v)$ is a clique and, by definition, $T(v) \subset C$ where
  $C$ is a chordless cycle, it follows that $\sizeof{T(v)} \leq 2$.
  Since we also have $\sizeof{T(v)} \geq 2$ since $G$ is $2$-connected,
  the conclusion follows.
\end{proof}
\begin{lemma}\label{lem:ext-even}
  If $xy \in E(C)$ and $P$ is an external $x,y$-path, then $P$ has
  an even number of edges.
\end{lemma}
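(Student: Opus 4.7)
The plan is to argue by contradiction: suppose $P$ has an odd number of edges, and then construct an odd cycle that shares too many edges with $C$ to be consistent with $G \in \gstar$.

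Concretely, let $P'$ be the $x,y$-path in $C$ that avoids the edge $xy$. Since $C$ has odd length and $P'$ consists of all edges of $C$ except $xy$, we have $\lvert E(P') \rvert = \lvert C \rvert - 1$, which is even. Because $P$ meets $C$ only at $x$ and $y$, the union $P \cup P'$ is a cycle (two internally disjoint $x,y$-paths). Its length is $\lvert E(P) \rvert + \lvert E(P') \rvert$, which is odd$+$even$=$odd under the assumed parity. So $P \cup P'$ is an odd cycle of $G$.

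Now I compare $P \cup P'$ with $C$. They are distinct, since $xy \in E(C)$ but $xy \notin E(P \cup P')$ (the edge $xy$ is neither in $P'$ by construction, nor in $P$ since the internal vertices of $P$ avoid $C$ and $P$ is a simple path between $x$ and $y$ distinct from the single edge case). Their edge-intersection contains all of $E(P')$, which has $\lvert C \rvert - 1 \geq 4$ edges since $\lvert C \rvert \geq 5$. This violates the defining property of $\gstar$, giving the required contradiction, so $\lvert E(P) \rvert$ is even.

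I don't expect any real obstacle here; the only thing to be a little careful about is the degenerate case where $P$ might coincide with the edge $xy$ itself, but in that case $P$ is not truly external in the sense intended (its edge lies on $C$), so we may assume $P$ has an internal vertex and hence $P$ and $P'$ genuinely form a cycle distinct from $C$.
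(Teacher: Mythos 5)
Your proof is correct and is essentially identical to the paper's (one-line) argument: the paper forms the same odd cycle $(C-xy)\cup P$ and observes that it meets $C$ in at least two edges, contradicting $G\in\gstar$. Your extra attention to the degenerate case $P=xy$ is sensible but not needed in context, since the external paths actually used always have internal vertices in $\cbar$.
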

\begin{proof}
  If not, then $(C-xy) \cup P$ is an odd cycle that intersects
  $C$ in at least $2$ edges.
\end{proof}
\begin{lemma}\label{lem:constant}
  For all $v,w \in \cbar$, we have $T(v) = T(w)$.
\end{lemma}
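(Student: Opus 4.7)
The plan is to reduce the problem to the case of distinct components of $G[\cbar]$ and then derive a contradiction by constructing an odd cycle that shares many edges with $C$. First I would observe that $T(v)$ depends only on the component of $G[\cbar]$ containing $v$: if $v,w$ lie in the same component $H$ and $y\in T(w)$, then concatenating a $v,w$-path in $H$ with a $w,y$-path meeting $C$ only at $y$ produces a connected subgraph whose only $C$-vertex is $y$, and extracting a $v,y$-path from it witnesses $y\in T(v)$. This reduces the problem to showing $T(H_1)=T(H_2)$ for any two distinct components $H_1,H_2$ of $G[\cbar]$.

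Next I would pin down the shape of each $T$-set using the lemmas already proved. By Lemma~\ref{lem:tclique} each $T(v)$ is a clique in $G$, and $2$-connectivity of $G$ gives $\sizeof{T(v)}\geq 2$. Since $C$ is chordless of length at least $5$ (the case $\sizeof{C}=3$ was handled by Theorem~\ref{thm:hin}), the largest clique in $G[C]$ has size $2$, so $T(v)$ must be a pair of $C$-adjacent vertices, i.e., an edge of $C$.

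For the main step, suppose toward contradiction that distinct components $H_1,H_2$ of $G[\cbar]$ satisfy $T(H_1)\neq T(H_2)$, and write these two edges of $C$ as $ab$ and $cd$. Lemma~\ref{lem:external-path} supplies external paths $P_1$ (an $a,b$-path with interior in $H_1$) and $P_2$ (a $c,d$-path with interior in $H_2$), and Lemma~\ref{lem:ext-even} gives that both have even length. The key construction is the closed walk obtained by following $P_1$, then an arc of $C$, then $P_2$, then the complementary arc of $C$, choosing the two $C$-arcs so that exactly the edges $ab$ and $cd$ are omitted. Since the interiors of $P_1$, $P_2$, and the two $C$-arcs lie in the pairwise disjoint sets $H_1$, $H_2$, and $C \setminus \{a,b,c,d\}$, this closed walk is in fact a simple cycle of length $\sizeof{E(P_1)}+\sizeof{E(P_2)}+(\sizeof{C}-2)$, which is $\text{even}+\text{even}+\text{odd}=\text{odd}$. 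But this odd cycle shares $\sizeof{C}-2\geq 3$ edges with $C$, contradicting $G\in\gstar$.

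The subcase where $T(H_1)$ and $T(H_2)$ share a vertex (say $b=c$) is handled uniformly: then $P_1$ and $P_2$ meet only at the shared endpoint, one closes them up along the long arc of $C$ from $d$ back to $a$, and the same parity and edge-count bookkeeping yields an odd cycle sharing $\sizeof{C}-2\geq 3$ edges with $C$. The main obstacle is packaging the shared-vertex and disjoint subcases into a single clean construction and verifying that the closed walk really is a simple cycle; once that is in hand, the parity arithmetic and the bound $\sizeof{C}\geq 5$ yield the contradiction mechanically.
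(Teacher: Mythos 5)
Your proposal is correct and follows essentially the same route as the paper: reduce to vertices in distinct components of $G[\cbar]$, use Lemmas~\ref{lem:external-path}, \ref{lem:tclique}, and \ref{lem:ext-even} to get two internally disjoint even external paths spanning two distinct edges of $C$, and form the odd cycle $(C - \{xy,st\}) \cup (P \cup Q)$ sharing at least $\sizeof{C}-2 \geq 3$ edges with $C$. Your explicit treatment of the shared-endpoint subcase and of why $\sizeof{T(v)}=2$ is just a more careful spelling-out of steps the paper leaves implicit.
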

\begin{proof}
  If $v,w$ are in the same component of $G[\cbar]$, this is
  clear. Otherwise, let $v$ and $w$ be vertices in different
  components of $G[\cbar]$ such that $T(v) \neq T(w)$. By
  Lemma~\ref{lem:tclique}, we may write $T(v) = \{x,y\}$ and
  $T(w) = \{s,t\}$. Let $P$ be an external $x,y$-path and let $Q$ be
  an external $s,t$-path as guaranteed by
  Lemma~\ref{lem:external-path}.  Every vertex of $V(P) \cap \cbar$ is
  in the same component of $G[\cbar]$ as $v$, while every vertex of
  $V(Q) \cap \cbar$ is in the same component of $G[\cbar]$ as $w$. It
  follows that $P$ and $Q$ are internally disjoint.

  By Lemma~\ref{lem:tclique}, we have $\{xy, st\} \subset E(C)$, so by
  Lemma~\ref{lem:ext-even}, the paths $P$ and $Q$ each have an even
  number of edges. Thus $(C - \{xy, st\}) \cup (P \cup Q)$ is an odd
  cycle that shares at least $\sizeof{C} - 2$ edges with $C$.  As
  $\sizeof{C} \geq 5$, this contradicts $G \in \gstar$.
\end{proof}
Since Lemma~\ref{lem:constant} states that $T$ is constant on $\cbar$,
in the rest of the proof we let $\{x_1, x_2\}$ be the constant value
of $T$. By Lemma~\ref{lem:tclique}, we have $x_1x_2 \in E(C)$. Our
goal now is to prove that every component of $G[\cbar]$ is a path with
one endpoint adjacent to $x_1$, the other endpoint adjacent to $x_2$,
and all internal vertices nonadjacent to $C$. As such a path must have
an even number of edges, this clearly implies that $G$ is a
$\Theta$-graph having the desired form. Let $H$ be any component of
$G[\cbar]$. For $v \in V(G)$, let $N_H(v) = N(v) \cap V(H)$.
\begin{lemma}\label{lem:unique-path}
  If $v \in N_H(x_1)$, then $H$ has exactly one path $P$ such
  that $P$ is a $v,w$-path with $w \in N_H(x_2)$. By symmetry, the
  same statement holds when $x_1, x_2$ are interchanged.
\end{lemma}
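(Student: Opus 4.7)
The plan is to establish existence and uniqueness separately. For existence, Lemma~\ref{lem:constant} shows $T(u) = \{x_1,x_2\}$ for every $u \in \cbar$; in particular every vertex of $H$ touches $x_2$, which forces $N_H(x_2)$ to be nonempty. Since $H$ is connected, any choice of $w \in N_H(x_2)$ is joined to $v$ by a path inside $H$, furnishing at least one $v,w$-path of the required form.

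For uniqueness, I would argue by contradiction. Suppose $P_1$ and $P_2$ are distinct paths in $H$ from $v$ to vertices $w_1,w_2 \in N_H(x_2)$ respectively (possibly with $w_1 = w_2$). Prepending the edge $x_1v$ and appending the edge $w_ix_2$ to each $P_i$ produces an external $x_1,x_2$-path $Q_i$ whose internal vertices lie entirely in $H \subset \cbar$. By Lemma~\ref{lem:ext-even}, $\sizeof{E(Q_i)}$ is even, so closing $Q_i$ with the edge $x_1x_2 \in E(C)$ (present by Lemma~\ref{lem:tclique}) yields an odd cycle $C_i$ of length $\sizeof{E(P_i)} + 3$. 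The cycles $C_1$ and $C_2$ are distinct, because $P_1 \neq P_2$ forces $Q_1 \neq Q_2$ as edge sets and the two $C_i$ differ from the $Q_i$ only by the single common edge $x_1x_2$; yet both $C_1$ and $C_2$ contain the pair of edges $x_1v$ and $x_1x_2$, giving two distinct odd cycles that share at least two edges and contradicting $G \in \gstar$.

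The main obstacle is conceptual rather than computational: the whole argument reduces to spotting the construction of $Q_i$ and recognizing that both closing cycles $C_i$ automatically share the pair $\{x_1v,\, x_1x_2\}$. A boundary case worth checking is $v \in N_H(x_2)$, where one candidate path is the trivial path $P = v$ and its corresponding $Q$ closes into the triangle $x_1vx_2$; the argument still goes through unchanged, since this triangle shares $x_1v$ and $x_1x_2$ with any competing odd cycle $C_j$ of the construction above.
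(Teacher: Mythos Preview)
Your proposal is correct and follows essentially the same route as the paper's proof: existence via the fact that $x_2\in T(v)$ yields a suitable path inside $H$, and uniqueness by extending two hypothetical distinct paths to external $x_1,x_2$-paths $Q_i$, invoking Lemma~\ref{lem:ext-even} to make $Q_i+x_1x_2$ odd, and noting the shared edges $\{x_1v,\,x_1x_2\}$. Your existence argument is phrased slightly differently (nonemptiness of $N_H(x_2)$ plus connectivity of $H$, rather than directly truncating a touching path), and your explicit treatment of the degenerate case $v\in N_H(x_2)$ is a nice addition, but the substance is the same.
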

\begin{proof}
  Since $x_2 \in T(v)$, it is clear that at least one such path
  exists, so we must show that it is unique. Suppose to the contrary
  that $P_1$ and $P_2$ are distinct paths in $H$ such that $P_i$ is a
  $v,w_i$-path with $w_i \in N(x_2)$. (Possibly $w_1 = w_2$.)  Let
  $Q_i$ be the path $x_1P_ix_2$, so that $Q_1$ and $Q_2$ are distinct
  external $x_1,x_2$-paths. By Lemma~\ref{lem:ext-even}, the paths
  $Q_1$ and $Q_2$ each have an even number of edges, so that
  $Q_1 + x_1x_2$ and $Q_2 + x_1x_2$ are distinct odd cycles whose
  intersection contains $\{x_1x_2, x_1v\}$.  This contradicts
  $G \in \gstar$.
\end{proof}
\begin{corollary}
  $\sizeof{N_H(x_1)} = \sizeof{N_H(x_2)} = 1$.
\end{corollary}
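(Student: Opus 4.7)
The plan is to derive the corollary as a direct consequence of Lemma~\ref{lem:unique-path}, applied in its symmetric form. First I would observe that both $N_H(x_1)$ and $N_H(x_2)$ are nonempty. Indeed, pick any $v \in V(H) \subset \cbar$. By Lemma~\ref{lem:constant}, $T(v) = \{x_1, x_2\}$, so $v$ touches $x_2$; by the proof of Lemma~\ref{lem:external-path}, a $v, x_2$-path witnessing this touching can be taken with all internal vertices in $H$, and hence its last interior vertex (or $v$ itself, if $v$ is adjacent to $x_2$) lies in $N_H(x_2)$. Symmetrically $N_H(x_1) \neq \emptyset$.

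For the main argument, suppose for contradiction that $\sizeof{N_H(x_1)} \geq 2$, and pick distinct vertices $v_1, v_2 \in N_H(x_1)$ and any $w \in N_H(x_2)$. By the symmetric form of Lemma~\ref{lem:unique-path} (with the roles of $x_1$ and $x_2$ exchanged), $H$ contains exactly one path from $w$ to a vertex of $N_H(x_1)$. However, $H$ is connected, so it contains both a $w, v_1$-path and a $w, v_2$-path; since $v_1 \neq v_2$, these are two distinct paths from $w$ to $N_H(x_1)$, a contradiction. Hence $\sizeof{N_H(x_1)} = 1$, and by symmetry $\sizeof{N_H(x_2)} = 1$.

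I do not expect any real obstacle here: the content of Lemma~\ref{lem:unique-path} is already strong enough that the corollary is essentially immediate once one pins down the correct asymmetric application, namely starting from a neighbor of $x_2$ rather than of $x_1$. The only small point requiring care is verifying that $N_H(x_1)$ and $N_H(x_2)$ are nonempty, which is where the initial paragraph above is needed.
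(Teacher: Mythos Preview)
Your proof is correct and follows essentially the same approach as the paper: both show nonemptiness via the fact that every vertex of $H$ touches $x_1$ and $x_2$, then derive a contradiction from $\sizeof{N_H(x_1)}\geq 2$ by taking a vertex in $N_H(x_2)$ and two distinct paths to $N_H(x_1)$, violating the symmetric form of Lemma~\ref{lem:unique-path}. The only difference is cosmetic: the paper's nonemptiness argument is a one-line remark, whereas you spell out the path argument explicitly.
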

\begin{proof}
  Since every vertex of $H$ touches $x_1$ and $x_2$, each neighborhood
  is nonempty. If $\sizeof{N_H(x_1)} > 1$, say, then taking
  $v \in N_H(x_2)$ and distinct $w_1,w_2 \in N_H(x_1)$, the
  connectedness of $H$ implies that there is a $v, w_i$-path in $H$ for each
  $i$. Since these paths have different endpoints, they are distinct,
  contradicting Lemma~\ref{lem:unique-path}.
\end{proof}
\begin{lemma}\label{lem:path-cpt}
  Let $v_1,v_2$ be the unique neighbors of $x_1,x_2$ in $H$, respectively.
  The component $H$ is a $v_1,v_2$-path.
\end{lemma}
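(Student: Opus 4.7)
My plan is to argue by contradiction: let $P$ be the unique $v_1, v_2$-path in $H$ guaranteed by Lemma~\ref{lem:unique-path}, and suppose $H \neq P$; I will construct a second $v_1, v_2$-path in $H$ and so violate uniqueness. A preliminary observation I will use throughout is that every $u \in V(H) \setminus \{v_1, v_2\}$ satisfies $N_G(u) \subset V(H)$: any $y \in N_G(u) \cap V(C)$ would force $y \in T(u) = \{x_1, x_2\}$, but $v_1$ and $v_2$ are the unique $H$-neighbors of $x_1$ and $x_2$ respectively; and neighbors of $u$ in $\cbar$ lie in $V(H)$ since $H$ is a component of $G[\cbar]$.

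Next I split into two cases. First, if there is an edge $uw \in E(H) \setminus E(P)$ with $u, w \in V(P)$, then (taking $u$ to be the one closer to $v_1$ along $P$) the path $v_1 P u w P v_2$, using $uw$ as a chord, is a second $v_1, v_2$-path in $H$, contradicting Lemma~\ref{lem:unique-path}. Second, suppose there is a vertex $u \in V(H) \setminus V(P)$; let $U$ be the component of $H - V(P)$ containing $u$ and set $A = N_H(U) \cap V(P)$. The preliminary observation gives $N_G(U) \subset U \cup A$, and since $V(C)$ is nonempty and disjoint from $U \cup A$, the set $A$ separates $U$ from $V(C)$ in $G$; so by the 2-connectedness of $G$, $|A| \geq 2$. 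Choosing distinct $a, a' \in A$ with $a$ closer to $v_1$ on $P$, connectedness of $U$ provides an $a, a'$-path $Q$ in $H$ with interior in $U$, and then $v_1 P a Q a' P v_2$ is a $v_1, v_2$-path in $H$ different from $P$, again contradicting Lemma~\ref{lem:unique-path}.

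The main obstacle is the step establishing $|A| \geq 2$, since this is where the 2-connectedness of $G$ enters: the preliminary observation is what lets us translate ``$U$ attaches to $P$ at few vertices'' into ``$G$ has a small cut set separating $U$ from $V(C)$''. Once $|A| \geq 2$ is in hand, producing the required second $v_1, v_2$-path is a routine rerouting argument, and the whole proof works uniformly even in the degenerate case $v_1 = v_2$ (where $V(P) = \{v_1\}$ and the same cut argument shows $V(H) = \{v_1\}$).
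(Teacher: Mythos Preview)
Your proof is correct and uses the same two ingredients as the paper's proof: the uniqueness of the $v_1,v_2$-path $P$ from Lemma~\ref{lem:unique-path} and the $2$-connectedness of $G$. The organization differs slightly. The paper argues that if some vertex $z \in V(P)$ has a neighbor $w \notin V(P)$, then uniqueness of $P$ forces $z$ to lie on every $w,v_i$-path in $H$, so $z$ is a cut vertex of $G$ separating $w$ from $x_i$---a direct contradiction with $2$-connectedness. You instead invoke $2$-connectedness first to show that an outside component $U$ attaches to $P$ in at least two places, and then reroute through $U$ to manufacture a second $v_1,v_2$-path, contradicting uniqueness. In effect the two arguments run the implication ``uniqueness $\leftrightarrow$ no small cut'' in opposite directions. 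Your version is a bit longer but has the merit of making the preliminary observation $N_G(u) \subset V(H)$ for $u \in V(H)\setminus\{v_1,v_2\}$ explicit (the paper uses it implicitly) and of handling the degenerate case $v_1 = v_2$ cleanly.
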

\begin{proof}
  Let $P$ be the unique $v_1,v_2$-path in $H$, as guaranteed by
  Lemma~\ref{lem:unique-path}. The uniqueness of $P$ immediately implies
  that $P$ is chordless, i.e., $H[P] = P$. We prove that $V(H) = V(P)$.

  \newstuff{Suppose $w \in V(H) - V(P)$. Since $G$ is $2$-connected, there
  are two internally disjoint paths $P_1$ and $P_2$ such that $P_1$
  is a $w,v_1$-path and $P_2$ is a $w, v_2$-path. All vertices of $P_1$
  and $P_2$ share the same component of $G[\overline{C}]$ as $w$, so
  $V(P_1) \cup V(P_2) \subset H$. Letting $P_1^{-1}$ be the path
  $P_1$ traversed from $v_1$ to $w$, we see that $P_1^{-1}P_2$ is
  a $v_1,v_2$-path. This path contains $w$, and is therefore distinct
  from $P$, contradicting Lemma~\ref{lem:unique-path}.}
\end{proof}
Lemma~\ref{lem:path-cpt} completes the proof of Theorem~\ref{thm:main}.

\section{Proof of Theorems \ref{thm:bigmain} and \ref{thm:orientgstar}}\label{sec:edgecol}

Peterson and Woodall \cite{PW1,PW2} proved that a graph $G\in \gee{1}$
is $\Delta(G)$-edge-choosable by appealing to the characterization of
Theorem \ref{thm:hin}, and defining a locally-stronger notion of
edge-choosability that allowed them to prove their result block by
block. We extend their approach, defining a locally-stronger notion of
edge-orientability, and show that if this stronger definition holds
for each block of $G$, then $G$ itself is $k$-edge-orientable.

A graph $G$ is \emph{strongly k-edge-orientable} if for every $v \in V(G)$, the graph $G$ is \galvin{f_{k,v}}, where $f_{k,v}$ is defined by
  \[ f_{k,v}(e) =
  \begin{cases}
    d(v), &\text{ if $e$ is incident to $v$,} \\
    k,&\text{ otherwise.}
  \end{cases} \]
for all $e\in E(G)$. We call this a strengthening of $k$-edge-orientability (defined in the introduction) because we are always interested in $k\geq \Delta(G)$. (Recall that $\chi'_l(G)\geq \chi'(G)\geq \Delta(G)$ for every $G$, with $k$-edge-orientability implying $k$-edge-choosability.)

When discussing an orientation $D$ of some line graph $L(G)$, we will
write $e \ato f$ both to stand for the arc $(e,f)$ and as shorthand
for the statement that $(e,f) \in E(D)$. We also write $e \bidir f$ as
shorthand for ``$e \ato f$ and $f \ato e$''.
\begin{lemma}\label{lem:arrow}
  Let $H$ be a graph, and let $(X,Y)$ be a partition of $V(H)$.
  Let $D_X$ and $D_Y$ be kernel-perfect orientations of $H[X]$ and $H[Y]$,
  respectively. If $D$ is the orientation of $H$ obtained from $D_X \cup D_Y$
  by orienting every edge of $[X,Y]$ from its endpoint in $Y$ to its endpoint in $X$,
  then $D$ is a kernel-perfect orientation of $H$.
\end{lemma}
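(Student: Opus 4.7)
The plan is to take an arbitrary induced subdigraph $D[W]$ of $D$ and build a kernel in two stages, exploiting the fact that the cross edges are oriented uniformly in one direction. Write $W_X = W \cap X$ and $W_Y = W \cap Y$. Since $D_X$ is kernel-perfect, the induced subdigraph $D_X[W_X]$ has a kernel $S_X$. The idea is to then handle the $Y$-side relative to $S_X$: let
\[ W_Y' = \{y \in W_Y \st N^+_D(y) \cap S_X = \emptyset\}, \]
that is, the vertices in $W_Y$ not already dominated (in $D$) by some element of $S_X$ via a cross arc. Because $D_Y$ is kernel-perfect, the induced subdigraph $D_Y[W_Y']$ has a kernel $S_Y$. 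I will then claim that $S = S_X \cup S_Y$ is a kernel of $D[W]$.

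For independence, $S_X$ is independent in $D_X$ and $S_Y$ is independent in $D_Y$, so the only way $S$ could fail to be independent is via a cross edge $yx$ with $y \in S_Y$ and $x \in S_X$. But such an edge would be oriented $y \ato x$ by construction of $D$, contradicting $y \in W_Y'$. For domination, consider any $v \in W \setminus S$. If $v \in W_X$, then since $S_X$ is a kernel of $D_X[W_X]$ and every out-arc of $v$ in $D[W_X]$ is an out-arc in $D_X[W_X]$ (there are no $X \to Y$ arcs), $v$ has an out-neighbor in $S_X$. If $v \in W_Y \setminus W_Y'$, then by definition of $W_Y'$, $v$ already has an out-neighbor in $S_X$. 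If $v \in W_Y' \setminus S_Y$, then since $S_Y$ is a kernel of $D_Y[W_Y']$, $v$ has an out-neighbor in $S_Y$.

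The step that requires the most care is confirming that the out-neighbors witnessing each case actually lie in $D[W]$ and that nothing about cross arcs can sabotage independence; but this is handled cleanly by the one-way orientation of $[X,Y]$, which ensures both that $X$-vertices have all their out-neighbors inside $X$ and that any potential cross adjacency from $S_Y$ to $S_X$ would already have been ruled out by the definition of $W_Y'$. There is no genuine obstacle here — the one-directional orientation between $X$ and $Y$ is exactly what makes the two-stage kernel construction compatible — and the argument goes through for every choice of $W$, establishing kernel-perfectness of $D$.
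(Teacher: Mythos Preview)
Your proof is correct and follows essentially the same two-stage construction as the paper: first take a kernel $S_X$ on the $X$-side, then restrict to the $Y$-vertices not already dominated by $S_X$ and take a kernel $S_Y$ there, with independence and domination verified via the one-way orientation of the cross edges. The only cosmetic difference is that the paper phrases the definition of the restricted $Y$-set using the undirected neighborhood $N(v)\cap S_X$ rather than your $N^+_D(y)\cap S_X$, but these coincide since every $H$-neighbor of $y\in Y$ lying in $X$ is automatically an out-neighbor in $D$.
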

\begin{proof}
  To see that $D$ is kernel-perfect, let $Z$ be any subset of $V(H)$; we
  must show that $D[Z]$ has a kernel. Let $Z_X = Z \cap X$. Since
  $D_X$ is kernel-perfect, $D[Z_X]$ has some kernel $S_X$. (Possibly
  $Z_X = \emptyset$, in which case we can take $S_X = \emptyset$.)
  Define
  \[ Z_Y = \{v \in Z \cap Y \st N(v) \cap S_X = \emptyset\}. \]
  As $D_Y$ is kernel-perfect, $D[Z_Y]$ has some kernel $S_Y$. (As before,
  if $Z_Y = \emptyset$, then we take $S_Y = \emptyset$.)

  Let $S = S_X \cup S_Y$. We claim that $S$ is a kernel for $Z$.
  Clearly $S \subset Z$, and clearly $S$ is independent, since $S_X$
  and $S_Y$ are independent, and $N(v) \cap S_X = \emptyset$ for all
  $v \in S_Y$. It remains to show that every $w \in Z-S$ has an
  out-neighbor in $S$. If $w \in X$, then we have $w \in Z_X$, so $w$
  has an out-neighbor in $S_X$, since $S_X$ is a kernel for
  $D[Z_X]$. Otherwise, $w \in Y$. If $N(w) \cap S_X \neq \emptyset$,
  then since all edges in $[X,Y]$ are oriented from their endpoint in
  $Y$ to their endpoint in $X$, we see that $w$ has an out-neighbor in
  $S_X$. On the other hand, if $N(w) \cap S_X = \emptyset$, then
  $w \in Z_Y$, so $w$ has an out-neighbor in $S_Y$, since $S_Y$ is a
  kernel for $D[Z_Y]$.  This completes the proof.
\end{proof}
\begin{lemma}\label{lem:combineblocks}
  If $k \geq \Delta(G)$ and every block of $G$ is \sgalvin{k}
  then $G$ is \sgalvin{k}.
\end{lemma}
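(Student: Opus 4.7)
The plan is to induct on the number of blocks of $G$, gluing together block-level orientations one block at a time using Lemma~\ref{lem:arrow}. Since $L(G)$ splits as a disjoint union across the connected components of $G$, we may assume $G$ is connected. Fix $v \in V(G)$ and let $B_v$ be any block of $G$ containing $v$; we will construct an $f_{k,v}$-kernel-perfect orientation of $L(G)$. When $G$ consists of a single block, the conclusion is immediate from the strongly $k$-edge-orientable hypothesis applied to $G=B_v$ with distinguished vertex $v$.

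Otherwise, because the block-cut tree of a connected graph with at least two blocks has at least two leaves (all of which are blocks), we may choose a leaf block $B' \neq B_v$; let $c$ be the unique cut vertex of $G$ contained in $B'$. Set $G' = G - (V(B') \setminus \{c\})$. Then $G'$ is connected, has strictly fewer blocks than $G$, and every block of $G'$ is already a block of $G$, hence strongly $k$-edge-orientable; also $\Delta(G') \leq \Delta(G) \leq k$. Crucially, $v \in V(G')$: if $v$ belonged to $V(B') \setminus \{c\}$, then $v$ would be a non-cut vertex of $G$ and therefore would lie in no block but $B'$, contradicting $v \in V(B_v)$ with $B_v \neq B'$.

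By induction, $L(G')$ admits an $f_{k,v}$-kernel-perfect orientation $D_{G'}$; by hypothesis, $L(B')$ admits an $f_{k,c}$-kernel-perfect orientation $D_{B'}$. I would then apply Lemma~\ref{lem:arrow} to $L(G)$ with $X = E(G')$ and $Y = E(B')$. The key structural observation is that the edges of $L(G)$ joining $X$ and $Y$ are exactly the pairs of adjacent edges meeting at $c$, with one endpoint in each part: every vertex of $V(B') \setminus \{c\}$ is a non-cut vertex of $G$, so all edges incident to such a vertex lie in $E(B')$. The lemma then produces a kernel-perfect orientation $D$ of $L(G)$.

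The main obstacle, though largely bookkeeping, is the out-degree verification for $f_{k,v}$, particularly in the case $c = v$ where $v$ lies on the interface between $G'$ and $B'$. Because the $[X,Y]$ arcs are oriented from $Y$ to $X$, edges in $E(G')$ retain their $D_{G'}$-out-degree, which already meets $f_{k,v}$ by induction (with slack when $v = c$, since $d_{G'}(v) = d_G(v) - d_{B'}(c)$). Edges of $E(B')$ not incident to $c$ have no $L(G)$-neighbors in $X$, so their out-degree is unchanged from $D_{B'}$ and thus at most $k - 1$. For $e \in E(B')$ incident to $c$, the combined out-degree is at most $(d_{B'}(c) - 1) + d_{G'}(c) = d_G(c) - 1$, which is at most $k - 1$ because $k \geq \Delta(G)$, and is exactly $d_G(v) - 1$ when $c = v$---precisely the bound that $f_{k,v}$ demands.
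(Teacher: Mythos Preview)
Your proof is correct and follows essentially the same approach as the paper: both arguments glue together an $f_{k,v}$-kernel-perfect orientation on the part containing $v$ with an $f_{k,c}$-kernel-perfect orientation on the other part at a cut vertex $c$, invoke Lemma~\ref{lem:arrow} with all cross-arcs directed toward the $v$-side, and then verify the out-degree bounds (in particular the calculation $d^+_{D}(e) \le d_{B'}(c)-1 + d_{G'}(c) = d_G(c)-1$ for $e \in E(B')$ incident to $c$). The only cosmetic difference is that the paper phrases the reduction as a single two-part gluing step (``it suffices to show\ldots''), whereas you frame it as an explicit induction on the number of blocks via the block--cut tree, peeling off a leaf block distinct from a fixed block through $v$.
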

\begin{proof}
  It suffices to show that if $V(G) = A \cup B$ where
  $A \cap B = \{z\}$, $G[A]$ and $G[B]$ are \sgalvin{k}, and
  there are no edges from $A-z$ to $B-z$, then $G$ is \sgalvin{k}.

  Let any $v \in V(G)$ be given; we must show that $G$ is
  \galvin{f_{k,v}}.  Without loss of generality, suppose that
  $v \in A$.  (It is possible that $v=z$, in which case $v \in B$ as
  well.) Let $D_A$ and $D_B$ be kernel-perfect orientations of
  $L(G[A])$ and $L(G[B])$, respectively, such that
  \begin{align*}
    d^+_{D_A}(e) + 1 &\leq k &&\text{for all $e \in E(A)$,} \\
    d^+_{D_A}(e) + 1 &\leq d_{G[A]}(v)\leq k &&\text{for all $e \in E(A)$ with $v \in e$,} \\
    d^+_{D_B}(e) + 1 &\leq k &&\text{for all $e \in E(B)$, and} \\
    d^+_{D_B}(e) + 1 &\leq d_{G[B]}(z)\leq k &&\text{for all $e \in E(B)$ with $z \in e$.}
  \end{align*}
  Such orientations exist because $G[A]$ and $G[B]$ are \sgalvin{k}, and since $d_{G[A]}(v), d_{G[B]}(z) \leq \Delta(G)\leq k$.

  The orientations $D_A$ and $D_B$ give a direction for every edge of
  $L(G)$ except for the edges between $E(A)$ and $E(B)$, all of which
  correspond to a pair of $G$-edges incident to $z$. Let $D$ be the
  orientation that agrees with $D_A$ on $E(A)$, agrees with $D_B$ on
  $E(B)$, and orients every edge between $E(A)$ and $E(B)$ from the
  edge in $E(B)$ to the edge in $E(A)$. By Lemma~\ref{lem:arrow}
  (applied to the graph $H = L(G)$), the orientation $D$ is
  kernel-perfect. We claim that $d_D^+(e) + 1 \leq k$ for all
  $e \in E(G)$ and that $d_D^+(e) + 1 \leq d(v)$ for all $e$ incident
  to $v$.

  To show our first claim, first observe that if $e$ is not incident to $z$, then $e \in E(A)$
  or $e \in E(B)$, so that $d_D^+(e) = d_{D_A}^+(e)$ or $d_D^+(e) = d_{D_B}^+(e)$
  as appropriate, and so $d_D^+(e) \leq k - 1$. Likewise,
  if $e \in E(A)$ and $e$ is incident to $z$, then $d_D^+(e) = d_{D_A}^+(e) \leq k-1$.
  If $e \in E(B)$ and $e$ is incident to $z$, then we have
  \[ d_D^+(e) = d_{D_B}^+(e) + d_A(z) \leq d_B(z) - 1 + d_A(z) \leq \Delta(G) - 1 \leq k-1. \]
  Thus, $d_D^+(e) \leq k - 1$ for all $e \in E(G)$.

  Now suppose that $e$ is incident to $v$. If $e \in E(A)$, then we immediately
  have $d_{D}^+(e) = d_{D_A}^+(e) \leq d(v) - 1$. If $e \in E(B)$, then we must
  have $v=z$, and so
  \[ d_{D}^+(e) = d_{D_B}^+(e) + d_A(v) \leq d_{B}(v) - 1 + d_{A}(v) = d_G(v) - 1, \]
  as desired.
\end{proof}

We now proceed to show orientability for each of the block types present in Theorem \ref{thm:main}. Two of these block types, bipartite graphs and $K_4$, lie in $\gee{1}$, and hence we can avail ourselves of the following characterization due to Maffray. A \emph{clique} in a digraph is a set of vertices such that each pair $(u,v)$ is joined either by the arc $u \ato v$, or the arc $v \ato u$, or both. A vertex $u$ is a \emph{sink} in a clique if $v \ato u$ for every other vertex $v$ in the clique (possibly also $u \ato v$). Likewise, $u$ is a \emph{source} in a clique if $u \ato v$ for every other vertex $v$ in the clique.

\begin{theorem}[Maffray \cite{maffray}]\label{thm:maffray}
  If  $G \in \gee{1}$ then an orientation of $L(G)$ is kernel-perfect
  if and only if every clique in $L(G)$ has a sink.
\end{theorem}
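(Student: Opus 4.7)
The plan is to prove the two directions separately. For the forward direction (kernel-perfect $\Rightarrow$ every clique has a sink), let $K$ be a clique in $L(G)$; kernel-perfectness yields a kernel $S$ of $D[K]$, which must be a singleton $\{e\}$ because an independent set inside a clique has at most one element. The kernel condition then forces every other $f \in K$ to satisfy $f \ato e$, so $e$ is a sink of $K$.

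For the reverse direction, assume every clique of $L(G)$ has a sink in $D$. I would first observe the hypothesis is hereditary under taking induced subdigraphs: if $S \subseteq V(L(G)) = E(G)$, then every clique of $L(G)[S]$ is also a clique of $L(G)$, and its sink in $D$ lies in $S$ and remains a sink in $D[S]$. Moreover $L(G)[S] = L(G')$ where $G'$ is the subgraph of $G$ spanned by $S$, and $G' \in \gee{1}$ because subgraphs preserve membership in $\gee{1}$. Consequently it suffices to produce a single kernel of $D$---that is, a matching $M$ in $G$ such that every edge outside $M$ sends an arc in $D$ to an edge of $M$.

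To produce $M$, I would use Theorem~\ref{thm:hin} to reduce to the block level, combining kernels across cut-vertices by an argument in the spirit of Lemma~\ref{lem:arrow}, and then treat each admissible block type directly. For bipartite blocks, the cliques of $L(G)$ are precisely the star-cliques at each vertex $v$, and the hypothesis supplies a unique sink $e_v$ at each vertex (two sinks in the same clique would have no arc between them, contradicting their adjacency in $L(G)$); a Gale--Shapley-style stable matching with respect to these preferences furnishes the desired kernel. The $K_4$ and $K_2 \join \overline{K_r}$ blocks also contain triangle-cliques, so the kernel is extracted by a case analysis on the orientation of the triangle edges together with the central star-clique. The main obstacle is the bipartite case---verifying that a stable matching indeed exists and dominates correctly when the local preferences may contain ties from bidirectional arcs---and the reconciliation of block kernels at cut-vertices, which is governed by the sink of the star-clique at the cut-vertex and is compatible with the one-sided gluing furnished by Lemma~\ref{lem:arrow}.
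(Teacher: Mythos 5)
This statement is quoted from Maffray's paper and is used as a black box; the paper contains no proof of it, so there is nothing internal to compare your attempt against. Judged on its own, your forward direction is correct and complete: a kernel of $D[K]$ for a clique $K$ is a single vertex absorbing the rest, i.e.\ a sink. The reverse direction, however, is only a plan, and two of its steps have genuine problems. First, your parenthetical claim that ``two sinks in the same clique would have no arc between them'' is false under this paper's conventions: orientations are explicitly allowed to contain bidirected pairs (see the Remark following Lemma~\ref{lem:BBS}), so two sinks of a clique are simply joined by a bidirected edge and the sink need not be unique. Uniqueness is not what Gale--Shapley needs anyway; what it needs is a linear preference order at each vertex, and to get one you must show that the \emph{strict} part of the orientation restricted to each star-clique is acyclic (this does follow from the hypothesis, since a strict directed cycle inside a clique is a sub-clique with no sink, and sub-cliques of cliques are cliques) and then take a linear extension, checking that ``$e$ above $f$'' still implies $f \ato e$ in $D$. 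You flag this as the main obstacle but do not resolve it.

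Second, and more seriously, the reduction to blocks cannot be done ``in the spirit of Lemma~\ref{lem:arrow}.'' That lemma applies only when every arc between the two sides points from $Y$ to $X$, which is something you get to \emph{arrange} when constructing an orientation. Here the orientation $D$ is \emph{given}, and the arcs of $L(G)$ joining edges of different blocks at a cut vertex $z$ are arbitrary (subject only to the clique-sink condition on the star at $z$). Kernels found independently in the two blocks need not be compatible: both may cover $z$, violating independence, or an edge at $z$ left uncovered in one block may have all of its out-neighbors at $z$ lying outside the other block's kernel, violating absorption. Repairing this requires a strictly stronger inductive statement --- roughly, that one can find a kernel which either contains a prescribed absorbing edge at $z$ or avoids $z$ entirely while still being absorbed there --- and that strengthening, together with the case analysis for the $K_4$ and $K_2 \join \overline{K_r}$ blocks that you only gesture at, is the real content of Maffray's proof. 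As written, the proposal is a reasonable outline of the known strategy but not a proof.
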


For bipartite graphs, the standard proof of Galvin \cite{galvin} is
all that is needed to prove the orientability lemma we need. We
briefly include the details below.

\begin{lemma}\label{lem:bipgalvin}
  Every bipartite graph $G$ is \sgalvin{\Delta(G)}.
\end{lemma}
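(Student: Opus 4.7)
My plan is to mimic Galvin's original orientation argument for bipartite graphs, but with a carefully chosen $\Delta(G)$-edge-coloring that tightens the out-degree bound at the designated vertex. Let $k = \Delta(G)$, fix a bipartition $(X, Y)$ of $G$, and let $v \in V(G)$ be the designated vertex; by symmetry I may assume $v \in X$, and I write $d = d(v)$. The task is to construct a kernel-perfect orientation $D$ of $L(G)$ satisfying $d_D^+(e) + 1 \leq k$ for every $e \in E(G)$ and $d_D^+(e) + 1 \leq d$ whenever $e$ is incident to $v$.

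First I would invoke K\"onig's theorem to obtain a proper $k$-edge-coloring $c$ of $G$, and then permute the color set so that the $d$ colors appearing on edges at $v$ are exactly $\{k - d + 1, \dots, k\}$. Then I would orient $L(G)$ via the Galvin rule: for two edges $e_1, e_2$ sharing a common endpoint $w$, orient $e_1 \ato e_2$ iff $c(e_1) > c(e_2)$ when $w \in X$, and iff $c(e_1) < c(e_2)$ when $w \in Y$. Because $G$ is bipartite, every clique of $L(G)$ is a star at a single vertex of $G$; the smallest-color edge is a sink in each $X$-star and the largest-color edge is a sink in each $Y$-star, so Theorem~\ref{thm:maffray} delivers kernel-perfectness.

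Finally I would verify the out-degree bounds. The standard Galvin tally, applied to an edge $e = uw$ with $u \in X$, $w \in Y$, and $c(e) = i$, bounds the out-neighbors of $e$ at $u$ by $i - 1$ and at $w$ by $k - i$, yielding $d_D^+(e) \leq k - 1$ for every edge. The point of placing the colors at $v$ at the top of the palette is to sharpen this bound on edges $e = vu$: then $c(e) \in \{k - d + 1, \dots, k\}$, so only $c(e) - k + d - 1$ edges at $v$ have smaller color, and these combine with the at-most $k - c(e)$ out-neighbors at $u$ to give total out-degree exactly $d - 1$. The main difficulty here is not the kernel-perfectness, which is a formality given Theorem~\ref{thm:maffray} and the absence of triangles, but rather spotting the color-shift trick that makes the two contributions at $vu$ telescope to $d(v) - 1$ instead of $\Delta(G) - 1$.
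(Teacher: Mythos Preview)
Your argument is correct and is essentially the paper's own proof: both use a K\"onig $\Delta$-edge-coloring with the colors at $v$ bunched at one end of the palette, the Galvin orientation rule, and Theorem~\ref{thm:maffray} for kernel-perfectness. The only cosmetic difference is that the paper puts the colors at $v$ at the \emph{bottom} (namely $\{1,\dots,d(v)\}$) and orients from lower to higher at $X$-vertices, whereas you put them at the top and orient from higher to lower; these conventions are interchangeable by relabeling $i \mapsto k+1-i$, and the telescoping computation at $v$ is identical.
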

\begin{proof}
  Let $G$ be a bipartite multigraph, let $k = \Delta(G)$, and take any
  $v \in V(G)$. We show that $G$ is $f_{k,v}$-edge-orientable, i.e.,
  that $L(G)$ admits an $f_{k,v}$-kernel-perfect orientation.

  Since $G$ is bipartite, it is $k$-edge-colorable. Fix such an edge-coloring
  $\phi$ where the edges incident to $v$ have colors
  $1, \ldots, d(v)$. Fix a bipartition $(X,Y)$ of $G$ where $v \in
  X$.
  Orient the edges of $L(G)$ according to the edge-coloring of $G$:
  orient from lower color to higher color if the common endpoint lies
  in $X$, and from higher color to lower color if the common endpoint
  lies in $Y$.

  Since $G$ is triangle-free, every clique in $L(G)$ corresponds to a
  vertex in $G$, and the incident edges have a linear order in our
  orientation. Hence every clique in $L(G)$ has a sink, and since
  $G\in\gee{1}$, Theorem \ref{thm:maffray} implies that the
  orientation is kernel-perfect.

  For any edge $e\in E(G)$, each out-neighbor of $e$ in $L(G)$
  receives a different color under $\phi$: higher colors for
  incidences in $X$, and lower colors for incidences in $Y$. Hence the
  outdegree of $e$ in our orientation is at most $k-1$.

  Now consider an edge $e\in E(G)$ that is incident to $v$. We must show
  that the outdegree of $e$ in our orientation of $L(G)$ is at most
  $d(v)$. By our choice of coloring, the color of $e$ is $i$ for some
  $i \in \{1, \ldots, d(v)\}$.  As $v \in X$, the edge $e$ has at most
  $d(v)-i$ neighbors of higher color with their common endpoint in
  $X$, and at most $i-1$ neighbors of lower color with their common
  endpoint in $Y$, yielding a total of at most $d(v)-1$ out-neighbors,
  as desired.
\end{proof}

\begin{lemma}
  $K_4$ is \sgalvin{4}.
\end{lemma}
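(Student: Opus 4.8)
The plan is to exhibit, for a fixed vertex $v$ of $K_4$, a single kernel-perfect orientation of $L(K_4)$ realizing the degree bounds demanded by $f_{4,v}$; since $\mathrm{Aut}(K_4)$ acts transitively on $V(K_4)$, handling one vertex suffices. As $d(v)=3$, the requirement is an orientation $D$ of $L(K_4)$ that is kernel-perfect and has $d^+_D(e)\le 2$ for the three edges $e$ incident to $v$ and $d^+_D(e)\le 3$ for the remaining three edges. It helps to recall that $L(K_4)$ is the octahedron, whose maximal cliques are eight triangles: the four stars $\{e\in E(K_4): w\in e\}$ over the vertices $w$ of $K_4$, and the four edge-triples of the triangles of $K_4$. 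The three edges at $v$ form one such star-triangle $T_v$, the three edges opposite to it form a triangle-triple $T'$, and $T_v,T'$ are exactly the two constrained cliques. By Theorem~\ref{thm:maffray} it suffices to arrange that every clique of $L(K_4)$ has a sink, where---because our orientations may contain $2$-cycles---a triangle fails to have a sink precisely when it is a strict directed $3$-cycle, i.e.\ a directed triangle none of whose edges is a $2$-cycle.

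First I would record the obstacle that rules out the naive approach. No \emph{strict} orientation (one avoiding $2$-cycles) can meet the degree bounds. In such an orientation each of the six vertices of the $4$-regular graph $L(K_4)$ satisfies $d^+ + d^- = 4$, and a short count of $(\text{triangle},\text{source})$ incidences---each transitive triangle contributing one source, at a vertex whose out-neighbourhood spans the triangle's third edge, so that a vertex of out-degree $o$ supplies at most $\max(0,o-1)$ sources---shows that the number of transitive triangles is at most $6+\#\{e: d^+_D(e)=0\}$. The bounds $d^+\le 2$ on three edges and $d^+\le 3$ on the other three forbid two edges of out-degree $0$, so at most seven of the eight triangles can be transitive. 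Hence at least one directed triangle is unavoidable, and the proof must use the freedom highlighted in the Remark and permitted by Maffray: adding the reverse of one edge of a directed triangle, turning it into a $2$-cycle $e \bidir f$, immediately gives that triangle a sink.

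With this in mind I would construct $D$ explicitly. Orient $T_v$ transitively so its edges have small out-degree, and route the bulk of the out-arcs onto $T'$, whose edges can afford out-degree $3$; the forced cross-arcs then leave one or two cliques oriented as directed $3$-cycles. I would repair each offending triangle by adding one reverse arc directed \emph{out of a vertex of $T'$} (turning one of its edges into a $2$-cycle), spending the slack in the out-degree-$3$ budget on $T'$ while leaving the out-degrees on $T_v$ untouched. Finally I would verify, clique by clique, that all eight triangles possess a sink in $D$ (every edge trivially has one), so Theorem~\ref{thm:maffray} yields kernel-perfectness while the degree bounds give $f_{4,v}$-kernel-perfectness.

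The main obstacle is exactly the tension above: the out-degree cap $d(v)-1=2$ on the star-triangle $T_v$ is incompatible with any strict orientation in which every clique has a sink, so one cannot proceed via the clean picture used for bipartite graphs and must instead trade a little out-degree on $T'$ for $2$-cycles. The delicate point is the bookkeeping---the $2$-cycles must be inserted so that (i) the out-degree budget of $3$ on $T'$ is never exceeded and (ii) no previously transitive triangle is spoiled---after which one confirms that every one of the eight cliques retains a sink.
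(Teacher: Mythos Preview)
Your plan matches the paper's proof: both exhibit a single explicit orientation of $L(K_4)$ meeting the out-degree bounds of $f_{4,v}$ and invoke Theorem~\ref{thm:maffray} by checking that none of the eight triangles is a strict directed $3$-cycle. Your auxiliary counting argument showing that no strict orientation can work is correct and is a nice addition---the paper does not pause to justify this, but its pictured orientation (Figure~\ref{fig:k4orientation}) indeed contains exactly one $2$-cycle, on an edge between two $T'$-vertices, confirming your analysis.

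The only gap is that you never actually write the orientation down or carry out the clique-by-clique verification. Your repair heuristic (``add a reverse arc out of a $T'$-vertex to break each bad cycle'') is plausible but not self-evidently safe: one must check that the added arc does not push some $T'$-vertex past out-degree~$3$ and does not turn a previously transitive triangle into a new directed $3$-cycle. In the paper this bookkeeping is discharged by simply drawing the orientation and observing that it works; since that explicit verification is the entire substance of the lemma, your proposal as written is a correct outline rather than a completed proof.
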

\begin{proof}
  Consider the orientation of $L(K_4)$ shown in
  Figure~\ref{fig:k4orientation}. There are no induced directed triangles, and hence by Theorem~\ref{thm:maffray}, the
  orientation is kernel-perfect. The orientation has
  maximum outdegree $3$, and satisfies $d^+(e) \leq d(v)-1=2$ for $e$
  incident to $v$.
  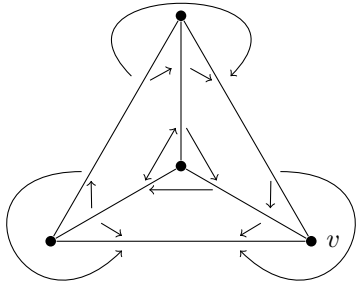
\begin{figure}
    \centering
    \begin{tikzpicture}[scale=2]
      \apoint{} (v1) at (90:1cm) {};
      \apoint{} (v2) at (210:1cm) {};
      \rpoint{v} (v3) at (330:1cm) {};
      \apoint{} (v4) at (0cm, 0cm) {};
      \draw (v1) -- (v2) node[pos=.3] (e12x) {} node[pos=.7] (e12y) {};
      \draw (v1) -- (v3) node[pos=.3] (e13x) {} node[pos=.7] (e13y) {};
      \draw (v1) -- (v4) node[pos=.3] (e14x) {} node[pos=.7] (e14y) {};
      \draw (v2) -- (v3) node[pos=.3] (e23x) {} node[pos=.7] (e23y) {};
      \draw (v2) -- (v4) node[pos=.3] (e24x) {} node[pos=.7] (e24y) {};
      \draw (v3) -- (v4) node[pos=.3] (e34x) {} node[pos=.7] (e34y) {};
      \draw[->] (e12x) -- (e14x);
      \draw[->] (e14x) -- (e13x);
      \draw[->] (e12x) .. controls ++(135:1cm) and ++(45:1cm) .. (e13x);
      \draw[->] (e13y) -- (e34x);
      \draw[->] (e34x) -- (e23y);
      \draw[->] (e13y) .. controls ++(0:1cm) and ++(-45:1cm) .. (e23y);
      \draw[->] (e14y) -- (e34y);
      \draw[->] (e34y) -- (e24y);
      \draw[<->] (e24y) -- (e14y);
      \draw[->] (e24x) -- (e12y);
      \draw[->] (e24x) -- (e23x);
      \draw[->] (e12y) .. controls ++(180:1cm) and ++(-135:1cm) .. (e23x);
    \end{tikzpicture}
    \caption{Kernel-perfect orientation of $L(K_4)$.}
    \label{fig:k4orientation}
  \end{figure}
\end{proof}

\begin{lemma}\label{lem:longcrown}
  Let $q_1 \leq \cdots \leq q_r$ be positive even integers, and let $G = \Theta_{1, q_1, \ldots, q_r}$.
  If $p \geq \max\{4, \Delta(G)\}$, then $G$ is \sgalvin{p}.
\end{lemma}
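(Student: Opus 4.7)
The plan is to fix $v \in V(G)$ and $p \geq \max\{4, \Delta(G)\}$, and to construct an $f_{p,v}$-kernel-perfect orientation $D$ of $L(G)$ from a Galvin-style orientation of the bipartite subgraph $G - e_0$, where $e_0 = x_1 x_2$ is the short edge of $\Theta_{1, q_1, \ldots, q_r}$. Because each $q_i$ is even, every path from $x_1$ to $x_2$ has even length, so $G - e_0 = \Theta_{q_1, \ldots, q_r}$ is bipartite with $x_1$ and $x_2$ in the same part. Lemma~\ref{lem:bipgalvin} then yields an $f_{\Delta(G-e_0),v}$-kernel-perfect orientation $D_0$ of $L(G-e_0)$.

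The default construction extends $D_0$ to $L(G)$ by orienting each of the $2r$ new $L(G)$-edges incident to $e_0$ into $e_0$. Writing $f_i$ and $g_i$ for the edges of the $i$th path incident to $x_1$ and $x_2$ respectively, set $f_i \to e_0$ and $g_i \to e_0$ for every $i$. With $X = \{e_0\}$, $Y = V(L(G-e_0))$, $D_X$ trivial, and $D_Y = D_0$, Lemma~\ref{lem:arrow} immediately gives kernel-perfectness of $D$. The outdegree of $e_0$ is zero; each $f_i$ or $g_i$ gains a single out-arc, keeping the global bound at $\Delta(G)-1 \leq p-1$. The strong bound at $v$ survives when $v \in \{x_1,x_2\}$ (Galvin already restricted $v$-incident outdegrees to $d_{G-e_0}(v)-1 = d_G(v)-2$, so the added arc still stays within $d_G(v)-1$) and when $v$ is an internal path vertex not adjacent to $x_1$ or $x_2$ (in which case no $v$-incident $L(G)$-edge is adjacent to $e_0$, so outdegrees are unchanged).

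The main difficulty is the remaining case: $v$ is an internal path vertex adjacent to $x_1$ or $x_2$, say the unique neighbor of $x_1$ on the $i$th path. There $f_i$ is simultaneously $v$-incident and adjacent to $e_0$, and Galvin's orientation already produces $d^+_{D_0}(f_i) = d_G(v)-1 = 1$, so the naive arc $f_i \to e_0$ would push the outdegree to $2$. I would fix this by reversing that arc to $e_0 \to f_i$ and reapplying Lemma~\ref{lem:arrow} with the enlarged partition placing $e_0$, $f_i$, and the other $v$-incident edge in $X$. Alongside this, choose the edge coloring underlying $D_0$ so that $f_i$ receives the minimum color among the edges at $x_1$; this makes every Galvin arc at the clique of $x_1$ point toward $f_i$, so the $Y$-to-$X$ arcs introduced by Lemma~\ref{lem:arrow} coincide with arcs already present in $D_0$ and do not inflate outdegrees beyond what Galvin's bound permits. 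The hypothesis $p \geq 4$ is used exactly here, providing the slack needed to keep the outdegrees of the remaining $f_j$'s within $p-1$ in the small-block cases where $\Delta(G) \leq 3$. Symmetric arguments handle the vertex adjacent to $x_2$, and when $q_i = 2$ (so $v$ is adjacent to both $x_1$ and $x_2$) the adjustment is applied on both ends simultaneously. The main obstacle throughout is coordinating the choice of Galvin coloring with the partition of Lemma~\ref{lem:arrow} so that the outdegree constraints and kernel-perfectness hold together in every case.
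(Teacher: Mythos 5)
Your route is genuinely different from the paper's. The paper proves Lemma~\ref{lem:longcrown} by splitting into three sub-lemmas: two for $K_2 \join \overline{K_k}$ (one when $v$ has maximum degree, one when $d(v)=2$, the latter requiring a bidirected edge $u_1u_2 \bidir e^*$ and an appeal to Maffray's sink characterization), and one for $\max_i q_i > 2$ proved by induction on $r$, deleting an edge of line-graph degree $2$ not containing $v$ and making it a source of outdegree $2$. Your uniform ``delete $x_1x_2$, apply Galvin to the bipartite remainder, then repartition via Lemma~\ref{lem:arrow}'' avoids both the induction and the bidirection trick; the paper in fact uses exactly your default construction, but only in the one sub-case where no deletable $e^*$ exists (there $v$ is not adjacent to $x_1$ or $x_2$, so the complication you wrestle with does not arise). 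I checked your hard case and it does go through: with $X=\{x_1x_2, f_i, e'\}$ (or $\{x_1x_2, f_i, g_i\}$ when $q_i=2$), $D_X$ a transitive orientation with the $v$-incident edges as sinks, and $f_i$ given color $1$ so that it is a sink of the $x_1$-clique in $D_0$ (note this uses $v\in X_b$, hence $x_1\in Y_b$), every $Y$-vertex ends with outdegree at most $\Delta(G)-1$ except one, discussed next.

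One claim in your sketch is not literally true and is where the remaining work lies: in the $q_i=2$ case the $Y$-to-$X$ arcs do \emph{not} all coincide with arcs of $D_0$. Since $f_i$ and $g_i$ are the two $v$-incident edges they receive colors $1$ and $2$, so at most one of them can be the minimum-color edge at its endpoint; if $g_i$ has color $2$, the color-$1$ edge $g_{j_0}$ at $x_2$ satisfies $g_i \ato g_{j_0}$ in $D_0$, and Lemma~\ref{lem:arrow} reverses this, giving $g_{j_0}$ two genuinely new out-arcs (to $x_1x_2$ and to $g_i$). You must then verify separately that $d^+_{D_0}(g_{j_0})\leq 1$ (true: it is the minimum color at $x_2\in Y_b$, so all its $D_0$-out-neighbors lie at its other, degree-$2$, endpoint), whence $d^+_D(g_{j_0})\leq 3\leq p-1$. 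This is exactly where $p\geq 4$ is needed, consistent with Lemma~\ref{lem:nok4minus}; your proposal gestures at this but does not carry out the count, and without it the argument is incomplete. With that verification added (and the routine check that the edge $e''$ following $e'$ gains at most one out-arc), your proof is correct.
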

We prove Lemma~\ref{lem:longcrown} by splitting into three cases: two cases in which
$G \iso K_2 \join \overline{K_k}$ for some $k$, as allowed in Theorem~\ref{thm:hin},
and one case in which $G$ is a larger theta-graph. We state each case as its own lemma,
since we will need to refer to some of these results individually later.
\begin{lemma}\label{lem:crowngalvin-bridge}
  Let $G$ be the graph $K_2 \join \overline{K_k}$, and
  let $v$ be a vertex of maximum degree. If $p \geq \max\{3, \Delta(G)\}$,
  then $G$ is \galvin{f_{p,v}}.
\end{lemma}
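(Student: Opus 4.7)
My plan would be to obtain the desired orientation of $L(G)$ by modifying Galvin's bipartite construction. Let $u_1, \ldots, u_k$ denote the vertices of $\overline{K_k}$, and write $e_0 = x_1 x_2$, $a_i = x_1 u_i$, $b_i = x_2 u_i$. By the $x_1 \leftrightarrow x_2$ symmetry of $G$ (and vertex-transitivity when $k = 1$), I may assume $v = x_1$, so $d(v) = k+1$.

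The first step is to delete the bridge: the graph $H = G - e_0$ is isomorphic to $K_{2,k}$, which is bipartite with $\Delta(H) = k$. Applying Lemma~\ref{lem:bipgalvin} to $H$ yields a kernel-perfect orientation $D_H$ of $L(H)$ with $d^+_{D_H}(e) \leq k-1$ for every $e \in E(H)$. Next I would extend $D_H$ to an orientation $D$ of $L(G)$ by directing every new edge of $L(G)$ (that is, every edge of $L(G)$ incident to $e_0$) \emph{into} $e_0$, so that $a_i \ato e_0$ and $b_i \ato e_0$ for each $i$.

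Checking the degree bounds is then routine: $d^+_D(e_0) = 0 \leq d(v) - 1$; for each $a_i$ (incident to $v$), $d^+_D(a_i) \leq (k-1) + 1 = k = d(v) - 1$; and for each $b_i$, $d^+_D(b_i) \leq (k-1) + 1 = k \leq p-1$, using $p \geq k+1$ when $k \geq 2$ and $p \geq 3$ when $k=1$. For kernel-perfectness, observe that $G \in \gee{1}$ since every odd cycle is a triangle $x_1 x_2 u_i$, so by Theorem~\ref{thm:maffray} it suffices to check that every clique $K$ of $L(G)$ has a sink in $D$. If $e_0 \notin K$, then $K$ is a clique of $L(H)$ and has a sink by the kernel-perfectness of $D_H$; if $e_0 \in K$, then $e_0$ itself is the sink since $d^+_D(e_0) = 0$.

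The delicate point that drives this construction is that $e_0$ has $2k$ neighbors in $L(G)$ but only a budget of $d(v)-1 = k$ out-arcs, so $e_0$ must absorb more than half of its incidences as in-arcs; making $e_0$ a universal sink realizes this budget while also automatically supplying a sink for each new triangle clique $\{e_0, a_i, b_i\}$ in $L(G)$.
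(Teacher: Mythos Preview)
Your approach is essentially identical to the paper's: delete the edge $e_0 = x_1x_2$, apply Lemma~\ref{lem:bipgalvin} to the bipartite remnant, then make $e_0$ a universal sink in $L(G)$; the paper verifies kernel-perfectness via Lemma~\ref{lem:arrow} rather than Theorem~\ref{thm:maffray}, but either route works.

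There is one small slip in the $k=1$ case. When $k=1$ the graph $H \cong K_{2,1}$ has $\Delta(H) = 2$, not $k=1$, so your blanket claim $d^+_{D_H}(e) \leq k-1 = 0$ for every $e$ cannot hold (the single edge of $L(H)$ must be oriented somewhere). The paper sidesteps this by handling $G = K_3$ separately with a transitive orientation of $L(K_3)$. Alternatively, you can repair your argument by invoking the full ``strongly'' part of Lemma~\ref{lem:bipgalvin} with special vertex $v=x_1$: this gives $d^+_{D_H}(a_i) \leq d_H(v)-1 = k-1$ specifically for the edges at $v$ (which is what your $a_i$-check really needs), while for $b_1$ one then has $d^+_{D_H}(b_1) \leq \Delta(H)-1 = 1$ and hence $d^+_D(b_1) \leq 2 \leq p-1$.
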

\begin{proof}
  If $\Delta(G) = 2$, then $G=K_3$ and a transitive orientation of $L(G)$
  suffices (since $p\geq 3$); we just have to pick an orientation where the edge not containing $v$ is a source.
  Hence we may assume that $\Delta(G) \geq 3$, so that
  there are only two vertices of maximum degree.

  Let $u_1,u_2$ be the vertices of maximum degree, and let $G'$ be the
  graph obtained from $G$ by deleting the edge $u_1u_2$. Observe that
  $\Delta(G') = \Delta(G) - 1$. We will find an orientation of $L(G')$
  and then extend this orientation to get the desired orientation of
  $L(G)$.

  As $G'$ is bipartite, Lemma~\ref{lem:bipgalvin} implies that $L(G')$
  admits a kernel-perfect orientation $D'$ such that:
  \begin{itemize}
  \item $d^+_{D'}(e) \leq \Delta(G')-1= \Delta(G)-2$ for all $e \in E(G')$, and
  \item $d^+_{D'}(e) \leq d_{G'}(v)-1$ for all $e \in E(G')$ with $v \in e$.
  \end{itemize}
  Now we extend $D'$ to an orientation $D$ of $L(G)$. The only edges of $L(G)$ that remain to orient
  are the edges joining $u_1u_2$ with the edges of $G'$: orient all of these towards their endpoint $u_1u_2$. The
  resulting orientation is kernel-perfect, since if an induced
  subdigraph of $L(G)$ contains $u_1u_2$, the single vertex $u_1u_2$
  is a kernel; this also follows from Lemma \ref{lem:arrow}. Note that every edge aside from $u_1u_2$ (which has
  outdegree $0$) has gained outdegree exactly $1$. Since
  $d_{G'}(v)=d_G(v)-1$ , the degree bound is also satisfied.
\end{proof}
\begin{lemma}\label{lem:crowngalvin-tip}
  Let $G$ be the graph $K_2 \join \overline{K_k}$, and
  let $v$ be a vertex of degree $2$. If $p \geq \max\{4, \Delta(G)\}$,
  then $G$ is \galvin{f_{p,v}}.
\end{lemma}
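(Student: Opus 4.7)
The plan is to apply Theorem~\ref{thm:maffray}: since $G \in \gee{1}$ (the only odd cycles are the triangles $u_1u_2w_j$), it suffices to orient $L(G)$ so that every clique has a sink and the outdegree constraints of $f_{p,v}$ hold. Label the edges of $G$ as $e_0 = u_1u_2$ and, for each $j \in \{1,\ldots,k\}$, $e_j = u_1w_j$ and $f_j = u_2w_j$; take $v = w_1$. The maximal cliques of $L(G)$ are the two star cliques $E(u_1) = \{e_0, e_1, \ldots, e_k\}$ and $E(u_2) = \{e_0, f_1, \ldots, f_k\}$, together with the triangle cliques $T_j = \{e_0, e_j, f_j\}$ for $j = 1, \ldots, k$. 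My construction will place a transitive tournament on each maximal clique, so that every subclique inherits a sink.

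I impose the linear order $e_2 < e_3 < \cdots < e_k < e_0 < e_1$ on $E(u_1)$ and the linear order $f_k < f_{k-1} < \cdots < f_2 < e_0 < f_1$ on $E(u_2)$, orienting each edge of these star cliques from the smaller element to the larger. This also pins down the edges $e_0 e_j$ and $e_0 f_j$ of each triangle $T_j$: for $j = 1$, both edges point out of $e_0$, making $e_0$ the source of $T_1$; for $j \geq 2$, both point into $e_0$, making $e_0$ the sink of $T_j$. Either way, the partial orientation of $T_j$ extends to a transitive tournament no matter which way I orient the remaining edge $e_j f_j$. I orient $e_1 \ato f_1$ in $T_1$. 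For $j \geq 2$, I orient $f_2 \ato e_2$ and $e_k \ato f_k$ (two distinct flips when $k \geq 3$) and take $e_j \ato f_j$ for all remaining $j$; when $k = 2$ the two flips coincide on the single pair $\{e_2, f_2\}$, and either orientation can be used.

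A direct tally then gives $d^+(e_0) = 2$ (arcs to $e_1$ and $f_1$) and $d^+(e_1), d^+(f_1) \leq 1 = d(v) - 1$. For $j \geq 2$, the star orders show that $d^+(e_j) \in \{k-j+2,\, k-j+3\}$ and $d^+(f_j) \in \{j,\, j+1\}$, depending on the orientation of $e_j f_j$; the chosen flips at $j = 2$ and $j = k$ are exactly what is needed to keep both quantities at most $k \leq p-1$ when $k \geq 3$, and the intermediate values are comfortably within the bound. The main obstacle is the case $k = 2$: the single pair $\{e_2, f_2\}$ must simultaneously serve the flips demanded at $j = 2$ and $j = k$, and these conflict, so whichever of $e_2, f_2$ receives the pair-edge attains outdegree $3$. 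The hypothesis $p \geq 4$ is precisely what absorbs this, since then $p - 1 \geq 3$.
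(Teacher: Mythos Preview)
Your proof is correct, and it takes a genuinely different route from the paper's argument. The paper proceeds by deleting the edge $u_1u_2$ to obtain a bipartite graph $G'$, invoking Lemma~\ref{lem:bipgalvin} to get a kernel-perfect orientation $D'$ of $L(G')$ with the appropriate degree bounds, and then carefully extending $D'$ to $L(G)$ by analyzing which edge $e^*$ (at most one) might receive an arc from one of the edges $u_iv$; the edge $u_1u_2$ is made a near-sink, with a single bidirected pair $u_1u_2 \bidir e^*$ to preserve the sink property in the relevant triangles. Your approach instead builds the orientation from scratch via explicit linear orders on the two star cliques $E(u_1)$ and $E(u_2)$, positioning $e_0$ just below $e_1$ and $f_1$ so that the edges at $v$ automatically have small outdegree, and then tuning the orientations of the pair-edges $e_jf_j$ to balance the remaining outdegrees. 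This is more elementary in that it does not rely on the bipartite lemma at all, and it produces an orientation with no bidirected arcs. The paper's approach, by contrast, is more modular and fits its overall strategy of reducing to previously established cases. Both invoke Theorem~\ref{thm:maffray} to certify kernel-perfectness; your use of it (transitive tournaments on all maximal cliques, hence sinks in every subclique) is clean and correct.
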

\begin{proof}
  By Lemma \ref{lem:crowngalvin-bridge}, we may assume that $k > 1$ so that $G \not\iso K_3$.  Let
  $u_1,u_2$ be the vertices in the copy of $K_2$, and let $G'$ be the
  graph obtained from $G$ by deleting $u_1u_2$. As in the proof of Lemma \ref{lem:crowngalvin-bridge},
  we may take $D'$ to be a kernel-perfect orientation of $G'$ such that:
  \begin{itemize}
  \item $d^+_{D'}(e) \leq \Delta(G')-1= \Delta(G)-2$ for all $e \in E(G')$, and
  \item $d^+_{D'}(e) \leq d_{G'}(v)-1=1$ for all $e \in E(G')$ with $v \in e$.
  \end{itemize}
  Now we extend $D'$ to an orientation $D$ of $L(G)$. The only edges of $L(G)$ that remain to orient
  are the edges joining $u_1u_2$ with the edges of $G'$. Call these the \emph{undetermined edges.}

  Since $d_{D'}^+(u_i v) \leq 1$ for $i \in \{1, 2\}$, and since either $u_1v \ato u_2v$
  or $u_2v \ato u_1v$, we see that there is at most one edge $e^* \in E(G')$ with $v \notin e^*$ that satisfies $u_1v \ato e^*$ or $u_2v \ato e^*$ (and these cannot both occur).

  Now we orient the undetermined edges. If $f\neq u_1u_2$ is incident to $u_i$ for some $i \in \{1,2\}$, then:
  \begin{itemize}
  \item If $v \in f$, we orient $u_1u_2 \ato f$.
  \item If $f = e^*$, we orient $u_1u_2 \bidir f$.
  \item Otherwise, we orient $f \ato u_1u_2$.
  \end{itemize}

  Observe that $d^+_D(u_1u_2) \leq 3 \leq p-1$.  Next we consider the
  outdegree of each vertex in $D'$ as compared to its outdegree in
  $D$.  The edges incident to $v$ have not gained any more outdegree,
  so $d^+_D(e) \leq d(v) - 1$ for all edges $e$ incident to $v$.  The
  other edges have gained outdegree at most one, so
  $d_D^+(e) \leq (\Delta(G')-1)+1=\Delta(G)-1$ for such edges.

  We claim that $D$ is kernel-perfect. Since $G\in\gee{1}$, by
  Theorem \ref{thm:maffray} it again suffices to prove that every
  clique has a sink. Since $D'$ is kernel-perfect, we need only
  consider a clique of the form $K\cup\{u_1u_2\}$ where $K$ is a
  clique in $D'$; suppose $e$ is a sink of $K$ in $D'$.  If
  $u_1u_2 \ato e$, then $e$ remains a sink. So suppose that
  $u_1u_2 \not\ato e$. We claim that in this case $u_1u_2$ is the
  required sink. If not, then there exists $f$ in $K$ with
  $f\not\ato u_1u_2$. This implies that $v \in f$. On the other hand,
  since $u_1u_2 \not\ato e$, we have $v \notin e$ and $e \neq e^*$.
  In particular, since $e \neq e^*$, we have $f \not\ato e$. This
  contradicts the assumption that $e$ is a sink in $K$.
\end{proof}

\begin{lemma}\label{lem:thetagalvin}
  Let $G$ be the theta-graph $\Theta_{1, q_1, \ldots, q_r}$.
  If $\max_i q_i > 2$ and $p \geq \max\{3, \Delta(G)\}$,
  then $G$ is \sgalvin{p}.
\end{lemma}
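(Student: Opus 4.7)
The plan is to mirror the strategy of Lemmas~\ref{lem:crowngalvin-bridge} and~\ref{lem:crowngalvin-tip}. Let $e_0 = x_1x_2$ and set $G' = G - e_0 = \Theta_{q_1, \ldots, q_r}$. Since each $q_i$ is even, every cycle in $G'$ is a union of two internally-disjoint paths of even total length, hence is even, so $G'$ is bipartite. Lemma~\ref{lem:bipgalvin} applied to $G'$ for the target vertex $v$ provides a kernel-perfect orientation $D'$ of $L(G')$ with $d^+_{D'}(f) \leq \Delta(G') - 1 = \Delta(G) - 2$ for every edge $f$ of $G'$ and $d^+_{D'}(f) \leq d_{G'}(v) - 1$ for edges $f$ incident to $v$. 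I would then extend $D'$ to an orientation $D$ of $L(G)$ by specifying the orientation of the $2r$ ``cross edges'' of $L(G)$ joining $e_0$ to its neighbors.

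For $v \in \{x_1, x_2\}$, or for $v$ an internal vertex of some path that is not adjacent to either of $x_1, x_2$, I would orient every cross edge from its $L(G')$-endpoint toward $e_0$. Then $d^+_D(e_0) = 0$, each cross-neighbor of $e_0$ gains exactly one unit of outdegree (remaining within $\Delta(G) - 1 \leq p - 1$), and no edge incident to $v$ is affected by the cross arcs, so the outdegree constraint at $v$'s edges is preserved directly from $D'$. Kernel-perfectness then follows from Lemma~\ref{lem:arrow} applied with $X = \{e_0\}$ and $Y = E(G')$.

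The remaining case---$v$ an internal vertex of some path, adjacent to $x_1$ and/or $x_2$, which includes the situation where $v$ is the unique internal vertex of a length-$2$ path---is the technical core. Here, directing a cross edge $vx_i \ato e_0$ would push $d^+_D(vx_i)$ past the tight bound $d_G(v) - 1 = 1$, so I would instead orient $e_0 \ato vx_i$ for each edge $vx_i$ incident to $v$ that is also incident to $x_1$ or $x_2$, and point the remaining cross edges toward $e_0$. Then $d^+_D(e_0) \leq 2 \leq p - 1$ and the other outdegree bounds are preserved as in the previous case. The main obstacle will be verifying kernel-perfectness: Lemma~\ref{lem:arrow} no longer directly applies since the cross edges now point both ways, and since $G$ contains odd cycles of length at least five when $\max_i q_i > 2$, Theorem~\ref{thm:maffray} is unavailable. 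I would verify kernel-perfectness by direct case analysis on an induced subset $Z \subseteq V(L(G))$: when $e_0 \notin Z$ a kernel of $D'[Z]$ suffices, and when $e_0 \in Z$ one locates a kernel $S'$ of $D'[Z - \{e_0\}]$ and takes either $S = S'$ (when some out-neighbor $vx_i$ of $e_0$ lies in $S'$, so that $e_0$ is dominated via the cross arc $e_0 \ato vx_i$) or $S = S' \cup \{e_0\}$ (when $S'$ is disjoint from the $L(G)$-neighborhood of $e_0$). Ensuring that one of these two outcomes can always be arranged is the technical crux; the hypothesis $\max_i q_i > 2$ should supply enough slack along the long path for $D'$ to admit a compatible kernel in each case, either by choosing the coloring in Lemma~\ref{lem:bipgalvin} so that $vx_1$ and $vx_2$ have the lowest colors at $x_1$ and $x_2$ respectively, or by modifying $D'$ along the long path before extension.
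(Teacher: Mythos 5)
Your easy cases are fine, and they coincide with the only situation in which the paper ever deletes $x_1x_2$: when $v$ is not adjacent to $x_1$ or $x_2$, making $e_0$ a sink and invoking Lemma~\ref{lem:arrow} works exactly as you describe (though note that for $v\in\{x_1,x_2\}$ the edges at $v$ \emph{do} gain the out-arc to $e_0$; the bound survives only because $d_{G'}(v)=d_G(v)-1$). The genuine gap is the case you yourself flag as the crux, and neither of your proposed repairs closes it. Take $v$ to be the internal vertex of a defining path of length $2$, so that $f_{p,v}$ forces $d_D^+(vx_1),d_D^+(vx_2)\le 1$. Your first fix is impossible to arrange: $vx_1$ and $vx_2$ share the endpoint $v$, so they receive distinct colors, while $x_1$ and $x_2$ each have degree $\Delta(G')$ in $G'$, so every color appears at each of them; hence at most one of $vx_1,vx_2$ can be the lowest-colored (sink) edge at its hub. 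At the other hub, say $x_2$, the color-$1$ edge $g\ne vx_2$ satisfies $vx_2\ato g$ in Galvin's orientation, which exhausts the outdegree budget of $vx_2$ and forces the cross edge to be oriented $e_0\ato vx_2$ only; with your rule $g\ato e_0$, the set $\{e_0,vx_2,g\}$ then induces the kernel-less directed triangle $e_0\ato vx_2\ato g\ato e_0$. Reversing that cross edge to $e_0\ato g$ costs $d^+(e_0)\ge 3$, which already violates the bound when $p=3$ (e.g.\ $\Theta_{1,2,4}$ with $\Delta(G)=3$). The second fix (``modify $D'$ along the long path'') is not an argument, and Lemma~\ref{lem:nok4minus} is a warning that this configuration is genuinely delicate.

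The paper escapes by a different reduction: in the hard case it never deletes $x_1x_2$. Instead it inducts on $r$, finding an edge $e^*$ both of whose endpoints have degree $2$ with $v\notin e^*$ (an interior edge of a long path, which exists unless $G=\Theta_{1,2,\dots,2,4}$ and $v$ is the middle of the $4$-path --- precisely your easy case), deleting it, handling the smaller $\Theta$-graph via the induction hypothesis together with Lemmas~\ref{lem:crowngalvin-bridge}, \ref{lem:crowngalvin-tip} and \ref{lem:combineblocks}, and reinserting $e^*$ as a source of outdegree $2$, which affects neither the budget of any edge at $v$ nor kernel-perfectness (Lemma~\ref{lem:arrow}). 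To salvage your single-deletion scheme you would need a prescribed-kernel property of $D'$ (a kernel containing $vx_i$ or avoiding $N_{L(G)}(e_0)$ in every induced subdigraph) that fails for general kernel-perfect orientations; the detour through $e^*$ avoids the issue entirely.
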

\begin{proof}
  We prove the theorem by induction on $r$, with trivial base case
  when $r=1$ and $G$ is an odd cycle (in this case $\Delta(L(G)) = 2$). We may assume
  that $q_1 \leq \cdots \leq q_r$, with $q_r > 2$.  Let any
  $v \in V(G)$ be given; we produce a \dalvin{f_{p,v}} orientation of
  $L(G)$.

  First suppose that there is some edge $e^*$ with $d_{L(G)}(e^*) = 2$
  and $v \notin e^*$. This implies that both endpoints of $e$ have
  degree $2$ in $G$. Let $G' = G-e^*$. The graph $G'$ may no longer be
  $2$-connected; its blocks consist of a $\Theta$-graph
  $\Theta_{1, q'_1, \ldots, q'_{r-1}}$ and possibly some blocks
  isomorphic to $K_2$. We will show that $G'$ is \sgalvin{p}.  By
  Lemma~\ref{lem:combineblocks} it suffices to show that the smaller
  $\Theta$-graph is \sgalvin{p}, since the $K_2$-blocks clearly are.

  \caze{1}{$r=2$.} In this case, the smaller $\Theta$-graph is
  an odd cycle, and is therefore \sgalvin{p} since $p \geq 3$.

  \caze{2}{$r \geq 3$ and $\max_i q'_i > 2$.} In this case, the induction
  hypothesis immediately implies that the smaller $\Theta$-graph is
  \sgalvin{p}.

  \caze{3}{$r \geq 3$ and all $q'_i = 2$.} In this case, the smaller
  $\Theta$-graph is $K_2 \join \overline{K_{r-1}}$, and in particular
  has maximum degree $r$. Since $p \geq \Delta(G) = r+1 \geq 4$,
  Lemmas~\ref{lem:crowngalvin-bridge} and \ref{lem:crowngalvin-tip}
  imply that the smaller $\Theta$-graph is \sgalvin{p}.
  \smallskip

  In all three cases, we have argued that $G'$ is \sgalvin{p}. Let
  $D'$ be a \dalvin{f_{p,v}}-orientation of $G'$.  We extend $D'$ to a
  \dalvin{f_{p,v}}-orientation of $G$ by orienting both $L(G)$-edges
  $e^*f$ as $e^* \ato f$. This orientation gives $e^*$ outdegree $2$,
  and it is kernel-perfect since $D'$ was kernel-perfect (by Lemma \ref{lem:arrow}). Thus, $G$ is
  \galvin{f_{p,v}}.
  \medskip

  Now suppose no such edge $e^*$ exists. Together with our earlier
  assumptions, this immediately implies that
  $q_1 = \cdots = q_{r-1} = 2$, that $q_r = 4$, and that $v$
  is the middle vertex of the defining path with $4$ edges,
  as shown in Figure~\ref{fig:theta-nostar}.
  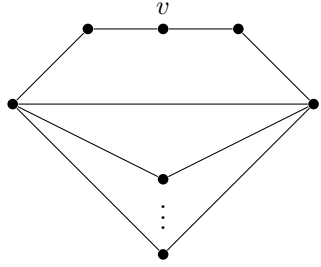
\begin{figure}
    \centering
    \begin{tikzpicture}
      \lpoint{} (u) at (0cm, 0cm) {};
      \rpoint{} (v) at (4cm, 0cm) {};
      \draw (u) -- (v);
      \apoint{} (a1) at (2cm, -1cm) {};
      \node at (2cm, -1.4cm) {$\vdots$};
      \apoint{} (a2) at (2cm, -2cm) {};
      \apoint{} (b1) at (1cm, 1cm) {};
      \apoint{v} (b2) at (2cm, 1cm) {};
      \apoint{} (b3) at (3cm, 1cm) {};
      \draw (u) -- (a1) -- (v);
      \draw (u) -- (a2) -- (v);
      \draw (u) -- (b1) -- (b2) -- (b3) -- (v);
    \end{tikzpicture}
    \caption{Case of Lemma~\ref{lem:thetagalvin} where no edge $e^*$ exists.}
    \label{fig:theta-nostar}
  \end{figure}

  Since $r \geq 2$, we have $\Delta(G) \geq 3$, so there are exactly
  two vertices of maximum degree.  Let $e$ be the edge joining the two
  vertices of maximum degree, and let $G' = G-e$. The graph $G'$ is
  bipartite, so by Lemma~\ref{lem:bipgalvin}, we can find a
  \galvin{f_{\Delta(G)-1, v}} orientation $D'$ of $L(G')$.  Extend
  $D'$ to an orientation $D$ of $G$ by making $e$ a sink; by
  Lemma~\ref{lem:arrow}, the orientation $D$ is
  kernel-perfect. Extending $D'$ to $D$ in this manner does not
  increase the outdegree of any edge incident to $v$, and every edge
  not incident to $v$ gains at most one new out-neighbor.  Since the
  maximum degree also increased by one, $G$ is \galvin{f_{p,v}}.
\end{proof}

We can now prove Theorems \ref{thm:orientgstar} and \ref{thm:bigmain}.

\begin{proof}[Proof of Theorem~\ref{thm:orientgstar}]
  By Theorem~\ref{thm:main}, every block of such a graph is either
  bipartite, isomorphic to $K_4$, or isomorphic to
  $\Theta_{1, p_1, \ldots, p_r}$ where $p_1, \ldots, p_r$ are even.
  We have proved that all such graphs are \sgalvin{t}, so by
  Lemma~\ref{lem:combineblocks}, $G$ is \sgalvin{t}.
\end{proof}

\begin{proof}[Proof of Theorem~\ref{thm:bigmain}]
  If $\Delta(G)\geq 4$ the result follows immediately from Theorem
  \ref{thm:orientgstar} and Lemma \ref{lem:BBS}, so it suffices to prove the result for
  subcubic graphs. If $G=K_4$ we refer to the reader to the proof by
  Cariolaro and Lih \cite{CL} to see that $G$ is 3-edge-choosable.
  If $\Delta(G) \leq 2$, then every component of $G$ is either a cycle or a path, and it readily follows that $G$ is $\chi'(G)$-edge-choosable (in particular a greedy approach works).

  For the remaining cases, we use induction on the number of blocks
  in $G$. Our base case (when $G$ is $2$-connected) will be handled
  as a special case of the induction step.

  If $G$ is $2$-connected (base case), then let $H = G$ and let $v$ be
  an arbitrary vertex of $G$. If $G$ is not $2$-connected (induction
  step), then let $H$ be a leaf block of $G$ and let $v$ be the cut
  vertex in $H$.

  First suppose that $H$ is \galvin{f_{3,v}}. If $G$ is $2$-connected, then
  this implies that $G=H$ and Lemma \ref{lem:BBS} implies that $G$ is $3$-edge-choosable. If $G$ is not
  $2$-connected, then let $\ell$ be any edge list assignment on $G$
  with $\sizeof{\ell(e)} \geq 3$ for all $e$. Let $G' = G - (V(H)-v)$.
  By the induction hypothesis, $G'$ is $\ell$-choosable. Taking any $\ell$-coloring
  of $G'$, we see that at most $3-d_H(v)$ colors appear on the edges of $G'$ incident to $v$.
  Deleting these colors from the $H$-edges incident to $v$ gives a list assignment
  $\ell^*$ with $\sizeof{\ell^*(e)} \geq f_{3,v}(e)$ for all $e \in E(H)$.
  Since $H$ is \galvin{f_{3,v}}, Lemma \ref{lem:BBS} guarantees that we can properly color its edges using
  the colors from $\ell^*$, allowing us to extend the $\ell$-coloring of $G'$
  to all of $G$.

  It remains to consider the case where $H$ is not \galvin{f_{3,v}}.
  By Theorem \ref{thm:main}, Lemma~\ref{lem:bipgalvin}, Lemma~\ref{lem:crowngalvin-bridge}, and Lemma~\ref{lem:thetagalvin},
  this implies that $H = K_2 \join \overline{K_2}$ and $v$ is a vertex
  of degree $2$. We will show that in this case, $H$ is $f_{3,v}$-choosable,
  even though it is not $f_{3,v}$-orientable. This follows from Theorem 2.3 of \cite{PW1},\cite{PW2}, however we include a proof here for completeness.

  Figure~\ref{fig:diamondchoose}(a)
  shows the list sizes given by $f_{3,v}$.  Let $\ell$ be any edge list
  assignment on $H$ with $\sizeof{\ell(e)} \geq f_{3,v}(e)$ for all $e$. Let $\{e_1, f_1\}$ and $\{e_2, f_2\}$ be disjoint matchings in $H$,
  with each $e_i$ incident to $v$, and let $h$ be the remaining edge
  of $H$, as shown in Figure~\ref{fig:diamondchoose}(b).

  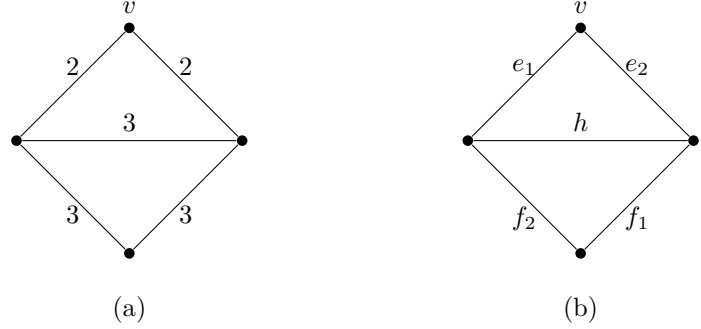
\begin{figure}
    \centering
    \begin{tikzpicture}[scale=1.5]
      \begin{scope}[xshift=-2cm]
      \apoint{v} (v) at (0cm, 1cm) {};
      \apoint{} (w) at (-1cm, 0cm) {};
      \apoint{} (z) at (1cm, 0cm) {};
      \apoint{} (u) at (0cm, -1cm) {};
      \draw (v) -- (w) node[pos=.5, above] {$2$};
      \draw (v) -- (z) node[pos=.5, above] {$2$};
      \draw (u) -- (w) node[pos=.5, below] {$3$};
      \draw (u) -- (z) node[pos=.5, below] {$3$};
      \draw (w) -- (z) node[pos=.5, above] {$3$};
      \node at (0cm, -1.5cm) {(a)};
      \end{scope}
      \begin{scope}[xshift=2cm]
      \apoint{v} (v) at (0cm, 1cm) {};
      \apoint{} (w) at (-1cm, 0cm) {};
      \apoint{} (z) at (1cm, 0cm) {};
      \apoint{} (u) at (0cm, -1cm) {};
      \draw (v) -- (w) node[pos=.5, above] {$e_1$};
      \draw (v) -- (z) node[pos=.5, above] {$e_2$};
      \draw (u) -- (w) node[pos=.5, below] {$f_2$};
      \draw (u) -- (z) node[pos=.5, below] {$f_1$};
      \draw (w) -- (z) node[pos=.5, above] {$h$};
      \node at (0cm, -1.5cm) {(b)};
      \end{scope}
    \end{tikzpicture}
    \caption{List sizes (a) and edge labels (b) in $K_2 \join \overline{K_2}$.}
    \label{fig:diamondchoose}
  \end{figure}

  First suppose that $\ell(e_i) \cap \ell(f_i) \neq \emptyset$ for
  some $i \in \{1,2\}$. Assigning the common color to both $e_i$ and $f_i$
  and deleting this color from all other lists, the remaining uncolored edges
  form a path on $3$ vertices with edge list sizes $2,2,1$. We complete the
  $\ell$-coloring by coloring this path greedily starting with the list of
  size $1$.
  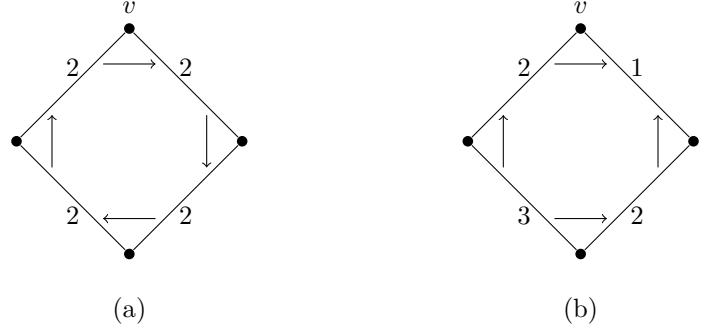
\begin{figure}
    \centering
    \begin{tikzpicture}[scale=1.5]
      \begin{scope}[xshift=-2cm]
      \apoint{v} (v) at (0cm, 1cm) {};
      \apoint{} (w) at (-1cm, 0cm) {};
      \apoint{} (z) at (1cm, 0cm) {};
      \apoint{} (u) at (0cm, -1cm) {};
      \draw (v) -- (w) node[pos=.5, above] {$2$} node[pos=.3] (vwx) {} node[pos=.7] (vwy) {};
      \draw (v) -- (z) node[pos=.5, above] {$2$} node[pos=.3] (vzx) {} node[pos=.7] (vzy) {};
      \draw (u) -- (w) node[pos=.5, below] {$2$} node[pos=.3] (uwx) {} node[pos=.7] (uwy) {};
      \draw (u) -- (z) node[pos=.5, below] {$2$} node[pos=.3] (uzx) {} node[pos=.7] (uzy) {};
      \draw[->] (vwx) -- (vzx);
      \draw[->] (vzy) -- (uzy);
      \draw[->] (uzx) -- (uwx);
      \draw[->] (uwy) -- (vwy);
      \node at (0cm, -1.5cm) {(a)};
      \end{scope}
      \begin{scope}[xshift=2cm]
      \apoint{v} (v) at (0cm, 1cm) {};
      \apoint{} (w) at (-1cm, 0cm) {};
      \apoint{} (z) at (1cm, 0cm) {};
      \apoint{} (u) at (0cm, -1cm) {};
      \draw (v) -- (w) node[pos=.5, above] {$2$} node[pos=.3] (vwx) {} node[pos=.7] (vwy) {};
      \draw (v) -- (z) node[pos=.5, above] {$1$} node[pos=.3] (vzx) {} node[pos=.7] (vzy) {};
      \draw (u) -- (w) node[pos=.5, below] {$3$} node[pos=.3] (uwx) {} node[pos=.7] (uwy) {};
      \draw (u) -- (z) node[pos=.5, below] {$2$} node[pos=.3] (uzx) {} node[pos=.7] (uzy) {};
      \draw[->] (vwx) -- (vzx);
      \draw[->] (uzy) -- (vzy);
      \draw[->] (uwx) -- (uzx);
      \draw[->] (uwy) -- (vwy);
      \node at (0cm, -1.5cm) {(b)};
      \end{scope}
    \end{tikzpicture}
    \caption{Possible remaining list sizes after coloring $h$,
with kernel-perfect orientations of $L(C_4)$.}
    \label{fig:cyclelists}
  \end{figure}

  Thus, we may assume that $\ell(e_i) \cap \ell(f_i) = \emptyset$ for $i \in \{1,2\}$.
  In this case, assign the edge $h$ a color from $\ell(h)$ not appearing in $\ell(e_1)$ (possible since $|\ell(h)|>|\ell(e_1)|$),
  and remove this color from all lists in which it appears. At worst, this removes a color
  from $\ell(f_1)$ and from at most one of $\ell(e_2)$ and $\ell(f_2)$. The two possible
  lower bounds on the remaining list sizes are shown in Figure~\ref{fig:cyclelists}.
  In both cases, the line graph of the remaining edges has a kernel-perfect orientation
  (pictured) in which every edge has outdegree $1$ less than the size of its remaining
  list. Thus, by Lemma \ref{lem:BBS}, we can complete the $\ell$-coloring.
\end{proof}

\section{Sharpness}\label{sec:sharp}

\begin{lemma}\label{lem:nok4minus}
  Let $H = K_2 \join \overline{K_2}$, and let $v$ be a vertex of degree $2$
  in $H$. There is no \dalvin{f_{3,v}}-orientation of $L(H)$.
\end{lemma}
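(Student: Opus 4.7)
The plan is to derive a contradiction by assuming an $f_{3,v}$-kernel-perfect orientation $D$ of $L(H)$ exists and tracking outdegree budgets against the triangles of $L(H)$. Labeling edges as in Figure~\ref{fig:diamondchoose}(b), I would first observe that $L(H)$ is $K_5$ minus the matching $\{e_1f_1,e_2f_2\}$ and hence has exactly $8$ edges. The outdegree budget $f_{3,v}-1$ sums to $1+1+2+2+2=8$, matching $|E(L(H))|$ exactly. Since every edge of $L(H)$ contributes at least $1$ (and $2$ if bi-oriented) to $\sum d_D^+$, this forces every edge to be single-oriented and each outdegree bound to hold with equality. This rigidity is what makes a finite case analysis tractable.

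Next I would enumerate the triangles of $L(H)$; all four contain $h$, namely $T_1=\{e_1,e_2,h\}$, $T_2=\{e_1,f_2,h\}$, $T_3=\{e_2,f_1,h\}$, and $T_4=\{f_1,f_2,h\}$. Each triangle must have a sink, since any kernel of a triangle tournament is a single sink. Because $f_1$ and $f_2$ each have $L(H)$-degree $3$ and outdegree $2$, their indegrees are $1$, so neither can be a sink of any triangle (which would require indegree $2$). Hence the sink of $T_4$ must be $h$, forcing $f_1\ato h$ and $f_2\ato h$; since $h$ then has indegree exactly $2$, the remaining two edges at $h$ must be $h\ato e_1$ and $h\ato e_2$.

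The final step is a two-case analysis on the sink of $T_1$, which by the previous step must be $e_1$ or $e_2$. In each case the tight outdegree budgets propagate and force the remaining orientations uniquely, producing a directed $3$-cycle in one of the other triangles ($T_2$ in one case and $T_3$ in the symmetric case), which violates kernel-perfectness. The main obstacle is simply bookkeeping: with no slack in the budget, every forced orientation must be tracked carefully to confirm the directed cycle. The key structural observation driving the whole argument is the exact match between the outdegree budget and $|E(L(H))|$, which simultaneously rules out bi-oriented edges and forces enough arcs at $h$ to reduce the problem to a very short case check.
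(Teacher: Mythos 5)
Your proposal is correct and follows essentially the same strategy as the paper: both hinge on the observation that $\sum_e\bigl(f_{3,v}(e)-1\bigr)=8=|E(L(H))|$, which rules out bidirected edges and forces every outdegree bound to be tight, and then both complete the argument with a short forced case analysis using the fact that every clique of $L(H)$ must contain a sink. The only cosmetic difference is that you organize the cases around the four triangles through $h=wz$ and finish with a directed $3$-cycle, whereas the paper works through the cliques at $v$ and $z$ and finishes with an outdegree overflow at the edges through $u$; both case analyses check out.
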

\begin{proof}
  Suppose to the contrary that $D$ is such an orientation.  We start
  by observing that there are $8$ edges in $L(H)$ and that
  $\sum_{e \in E(H)}(f_{3,v}(e)-1) = 8$, that is, the bounds on the
  outdegree in $D$ are \emph{exactly} large enough to orient each edge
  of $L(H)$. In particular, this implies that no edge in $L(H)$ can be
  bidirected in $D$, since this would cause the total outdegree over
  all the $H$-edges to exceed $8$.

  Let $Q_1$ be the clique in $L(H)$ consisting of the two edges
  incident to $v$, and let $vw$ be a source in $Q_1$. In an
  \dalvin{f_{3,v}}-orientation of $L(H)$, we must have $d^+(vw) = 1$,
  so every $L(H)$-edge $(vw)f$ for $f \notin Q_1$ must be oriented as
  $f \ato vw$.
  \begin{figure}
    \centering
    \begin{tikzpicture}[scale=2]
      \apoint{v} (v) at (0cm, 1cm) {};
      \lpoint{w} (w) at (-1cm, 0cm) {};
      \rpoint{z} (z) at (1cm, 0cm) {};
      \bpoint{u} (u) at (0cm, -1cm) {};
      \draw (v) -- (w) node[pos=.3] (vwx) {} node[pos=.7] (vwy) {};
      \draw (v) -- (z) node[pos=.3] (vzx) {} node[pos=.7] (vzy) {};
      \draw (w) -- (z) node[pos=.3] (wzx) {} node[pos=.7] (wzy) {};
      \draw (u) -- (w) node[pos=.3] (uwx) {} node[pos=.7] (uwy) {};
      \draw (u) -- (z) node[pos=.3] (uzx) {} node[pos=.7] (uzy) {};
      \draw[->] (vwx) -- (vzx);
      \draw[->] (wzx) -- (vwy);
      \draw[->] (uwy) .. controls ++ (225:1.5cm) and ++(135:1.5cm) .. (vwy);
      \draw[->] (uzy) -- (wzy);
      \draw[->] (uzy) .. controls ++ (-45:1.5cm) and ++(45:1.5cm) .. (vzy);
      \draw[->] (uwy) -- (wzx);
      \draw[->] (wzy) -- (vzy);
    \end{tikzpicture}
    \caption{Illustration of the proof of Lemma~\ref{lem:nok4minus}.}
    \label{fig:nok4minus}
  \end{figure}
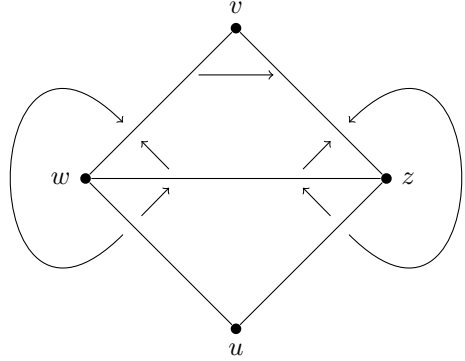
  Let $z$ be the vertex of degree $3$ distinct from $w$,
  as shown in Figure~\ref{fig:nok4minus}.

  Let $Q_2$ be the clique of $L(H)$ consisting of the edges incident
  to $z$, and let $e$ be a source in $Q_2$. Since $d^+(vz) \leq 1$
  (since we have a \dalvin{f_{3,v}}-orientation), we have $e \neq vz$.
  Furthermore, since $d^+(vw) \leq 1$ and $vw \ato vz$, we must have
  $wz \ato vz$, since otherwise we create a directed triangle in the
  line graph. Since $vw$ and $vz$ are both out-neighbors of $wz$,
  there are no other out-neighbors of $wz$. Hence, if $u$ is the final
  vertex in the graph, we know that $uw, uz \ato wz$. In particular,
  this means that $e=uz$, and hence that $uz \ato vz$. However, now
  both $uz$ and $uw$ have two out-neighbors outside $\{uw, uz\}$.
  Since also we must have either $uz \ato uw$ or $uw \ato uz$, we get
  a contradiction.
\end{proof}

\begin{theorem}
  If $G$ is the graph shown in Figure~\ref{fig:no-orn}, then $G$ is not 3-edge-orientable.
  \begin{figure}
    \centering
    \begin{tikzpicture}
      \bpoint{u} (x) at (0cm, 0cm) {};
      \foreach \i in {1,2,3}
      {
        \begin{scope}[rotate=120*\i, yshift=.5cm]
          \apoint{} (v\i) at (0cm, 1cm) {};
          \apoint{} (w\i) at (-1cm, 2cm) {};
          \apoint{} (z\i) at (1cm, 2cm) {};
          \apoint{} (u\i) at (0cm, 3cm) {};
          \draw (v\i) -- (w\i);
          \draw (v\i) -- (z\i);
          \draw (u\i) -- (w\i);
          \draw (u\i) -- (z\i);
          \draw (w\i) -- (z\i);
        \end{scope}
        \draw (x) -- (v\i);
      }
    \end{tikzpicture}
    \caption{A subcubic non-\galvin{3} graph in $\gee{1}$.}
    \label{fig:no-orn}
  \end{figure}
\end{theorem}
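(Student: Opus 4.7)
The plan is to assume for contradiction that $L(G)$ admits a kernel-perfect orientation $D$ with $d_D^+(e) \leq 2$ for every edge $e$. Let $x$ denote the central vertex of $G$, and for each $i \in \{1,2,3\}$ let $H_i$ be the diamond block attached to $x$ by the bridge $xv_i$, with $v_i$ the degree-$2$ vertex of $H_i$ adjacent to $x$. Since $H_i$ is an induced subgraph of $G$, the line graph $L(H_i)$ is an induced subdigraph of $D$, so the restriction $D_i$ of $D$ to $L(H_i)$ is itself kernel-perfect.

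My first step is to show that for each $i$ there is at least one out-arc in $D$ from $xv_i$ into $H_i$; equivalently, at least one of the $L(G)$-edges joining $xv_i$ to $\{v_iw_i, v_iz_i\}$ is oriented away from $xv_i$. Otherwise both of these $L(G)$-edges are oriented toward $xv_i$, which reduces the outdegrees of $v_iw_i$ and $v_iz_i$ in $D_i$ by exactly $1$ each, giving $d^+_{D_i}(v_iw_i), d^+_{D_i}(v_iz_i) \leq 1$ while all other edges of $H_i$ still satisfy $d^+_{D_i}(e) \leq 2$. This would make $D_i$ an $f_{3,v_i}$-kernel-perfect orientation of $L(H_i)$, contradicting Lemma~\ref{lem:nok4minus}.

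The second step examines the triangle $T = \{xv_1, xv_2, xv_3\}$ in $L(G)$. Since $D[T]$ is kernel-perfect and $T$ is a clique, its kernel is a single vertex; without loss of generality it is $xv_1$, giving $xv_2 \ato xv_1$ and $xv_3 \ato xv_1$. Combined with the out-arc from $xv_2$ into $H_2$ produced in the first step, $xv_2$ already uses its full outdegree of $2$, so the $L(G)$-edge between $xv_2$ and $xv_3$ cannot be oriented $xv_2 \ato xv_3$. By the symmetric argument applied to $xv_3$, we also have $xv_3 \not\ato xv_2$; but the $L(G)$-edge between $xv_2$ and $xv_3$ must carry at least one arc, giving the desired contradiction.

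The main obstacle is the outdegree bookkeeping in the first step: one must verify that the outdegrees of the two $v_i$-edges in $D_i$ drop by precisely the amount needed to meet the tight $f_{3,v_i}$-bounds of Lemma~\ref{lem:nok4minus}, and that no other edge of $L(H_i)$ exceeds its bound under the restriction. Once that calibration is in place, the triangle argument at $x$ is essentially a pigeonhole count on outdegree.
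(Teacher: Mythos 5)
Your proof is correct and follows essentially the same strategy as the paper: both reduce to Lemma~\ref{lem:nok4minus} by showing that some diamond $H_i$ inherits an $f_{3,v_i}$-kernel-perfect orientation from $D$. The only difference is cosmetic and sits at the central triangle: the paper picks a source $uv$ of that clique (which exists in any kernel-perfect orientation), so that both $v$-edges of its diamond are forced to point at $uv$ in one step, whereas you argue contrapositively that each $xv_i$ must send an arc into its diamond and then overload the central triangle using its kernel---both amount to the same outdegree count at the center.
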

\begin{proof}
  Suppose to the contrary that $D$ is an orientation of $L(G)$ that is kernel-perfect and has outdegrees at most 2. Let $Q$ be the clique in $L(G)$ consisting of the edges incident
  to the labeled vertex $u$, let $uv$ be a source in $Q$, and
  let $H$ be the copy of $K_2 \join \overline{K_2}$ containing
  the vertex $v$. Since $d^+(uv) \leq 2$, both edges $e \in E(H)$
  incident to $v$ must have the arc $e(uv)$ oriented as $e \ato uv$.
  Hence, these edges have outdegree at most $1$ in the subdigraph
  $D[E(H)]$. Thus, $D[E(H)]$ is an \dalvin{f_{3,v}} orientation
  of $L(H)$. By Lemma~\ref{lem:nok4minus}, no such orientation of
  $L(H)$ exists, yielding a contradiction.
\end{proof}

\section{Acknowledgments}
\newstuff{The authors thank the anonymous referees for their
  careful reading of the paper and their suggestions which
  improved its presentation.}
\bibliographystyle{amsplain} \bibliography{bib-cycle}
\end{document}